\let\pa\partial  
\let\na\nabla  
\let\eps\varepsilon  
\newcommand{\N}{{\mathbb N}}  
\newcommand{\R}{{\mathbb R}} 
\newcommand{\C}{{\mathbb C}}
\newcommand{\diver}{\operatorname{div}}
\newcommand{\T}{{\mathcal T}}
\newcommand{\M}{{\mathcal M}}
\newcommand{\Exp}{{\mathcal E}{\mathrm{xp}}}
\newcommand{\Log}{{\mathcal L}{\mathrm{og}}}
\newcommand{\ii}{{\mathrm i}}
\newtheorem{theorem}{Theorem}   
\newtheorem{lemma}[theorem]{Lemma}   
\newtheorem{proposition}[theorem]{Proposition}
\newtheorem{definition}{Definition}
\begin{document}  

\title[Spinorial quantum drift-diffusion equations]{
Formal derivation of quantum drift-diffusion equations \\ with spin-orbit interaction}

\author[L.~Barletti, P.~Holzinger, and A.~J\"ungel,]{Luigi Barletti, 
Philipp Holzinger, and Ansgar J\"ungel}

\address{Dipartimento di Matematica, Università di Firenze, Viale Morgagni 67/A, 
  50134 Firenze,	Italy}
\email{luigi.barletti@unifi.it} 

\address{Institute for Analysis and Scientific Computing, Vienna University of  
	Technology, Wiedner Hauptstra\ss e 8--10, 1040 Wien, Austria}
\email{philipp.holzinger@tuwien.ac.at} 

\address{Institute for Analysis and Scientific Computing, Vienna University of  
	Technology, Wiedner Hauptstra\ss e 8--10, 1040 Wien, Austria}
\email{juengel@tuwien.ac.at} 

\date{\today}

\thanks{The last two authors have been partially supported by the 
Austrian Science Fund (FWF), grants P30000, P33010, F65, and W1245. 
This work received funding from the European 
Research Council (ERC) under the European Union's Horizon 2020 research and 
innovation programme, ERC Advanced Grant NEUROMORPH, no.~101018153.}

\begin{abstract}
Quantum drift-diffusion equations for a two-dimensional electron gas with
spin-orbit interactions of Rashba type are formally derived from a collisional 
Wigner equation. The collisions are modeled by a Bhatnagar--Gross--Krook-type 
operator describing the relaxation of the electron gas to a local equilibrium
that is given by the quantum maximum entropy principle. Because of
non-commutativity properties of the operators, 
the standard diffusion scaling cannot be used
in this context, and a hydrodynamic time scaling is required.
A Chapman--Enskog procedure leads, up to first order in the relaxation time, 
to a system of
nonlocal quantum drift-diffusion equations for the charge density and
spin vector densities. Local equations including the Bohm potential
are obtained in the semiclassical
expansion up to second order in the scaled Planck constant.
The main novelty of this work is that all spin components are considered, 
while previous models only consider special spin directions.
\end{abstract}

\keywords{Wigner--Boltzmann equation, diffusion limit, spin-orbit interaction,
quantum maximum entropy principle, semiclassical model, Bohm potential.}  
 
\subjclass[2010]{35K55, 35Q40, 35Q81, 82B10.}  

\maketitle


\section{Introduction}

Spintronics exploits the electron spin as a further degree of freedom in
semiconductor materials. The objective of spintronics is to develop fast, high-capacity,
and low-power information and communication devices. The design of spintronic
structures is accelerated by numerical simulations that optimize the device
properties. To achieve efficient but physically accurate simulations,
macroscopic spin models, also including quantum features, are needed.

In the literature, usually simplified models are considered.
A simple approach is to consider specific directions of the spin vector,
for instance the spin-up and spin-down electron densities or, equivalently, the 
total density and the spin polarization \cite{ZFD02}. 
A more complete picture is obtained by taking into account 
the complete spin vector and not 
only its projection on a given direction. Such models have four variables: the
charge density and the densities of the three spin components \cite{ElH14,PoNe11}. 
Quantum corrections have been included in the former approach in \cite{BaMe10}, 
leading to spinorial quantum drift-diffusion equations for the spin-up and spin-down 
densities, while the works \cite{BHJ21,PoNe11} are concerned with the derivation of 
full spin-vector models but without quantum corrections. 
Up to our knowledge, no drift-diffusion models with a full spin structure 
{\em and} quantum corrections have been derived in the literature so far.
In this paper, we fill this gap by deriving spinorial quantum drift-diffusion
equations for a two-dimensional electron gas from a collisional von Neumann
equation. 

\subsection{Setting}

We consider an electron gas confined in an asymmetric two-dimensional
potential well. Then the electrons experience a spin-orbit interaction of Rashba type
\cite{ByRa84}. The Rashba effect is a momentum-based splitting of spin bands, which
comes from the combined effect of spin-orbit interaction and an asymmetry of the 
potential. It manifests as an effective magnetic field orthogonal to the confinement
direction and the electron motion; see Figure \ref{fig}. 
The spin orientation can be indirectly controlled
by the gate voltage, which deviates the electrons, thus changing the direction
of the effective magnetic field. We refer to the review \cite{ZFD04} for more details.

\begin{figure}[h]
\begin{center}
\includegraphics[width=120mm]{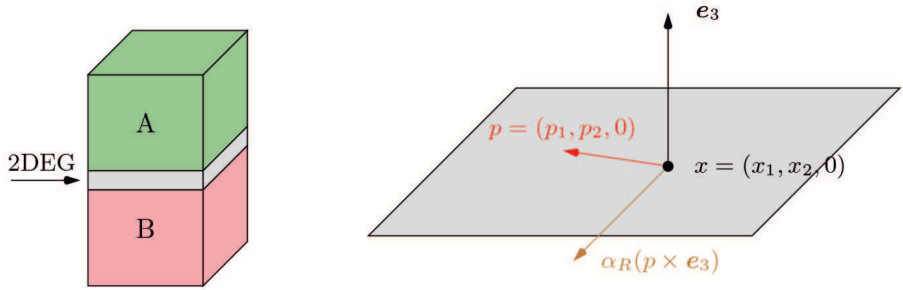} 
\caption{Left: A two-dimensional electron gas (2DEG) is confined between two
different semiconductor materials A and B (for instance, InAlAs and InGaAs). 
Right: The electrons of the 2DEG experience
an effective magnetic field $\alpha_R(p\times \bm{e}_3)$ orthogonal to both
the electron momentum $p$ and the confinement direction $\bm{e}_3$, where
$\alpha_R>0$ and $\bm{e}_3=(0,0,1)^T$.}
\label{fig}
\end{center}
\end{figure}

The motion of the confined electrons in the $(x_1,x_2)$-plane is governed
by the (scaled) von Neumann equation for the density operator $\hat\rho(t)$,
\begin{equation}\label{1.vne}
  \ii\eps\tau_0\pa_t\hat\rho = [\mathcal{H}_\eps,\hat\rho]
	:= \mathcal{H}_\eps\hat\rho - \hat\rho\mathcal{H}_\eps, \quad t>0,
\end{equation}
where the (scaled) Hamiltonian $\mathcal{H}_\eps$ is the sum of the kinetic energy, 
potential energy, and spin-orbit interaction,
\begin{equation}\label{1.H}
  \mathcal{H}_\eps = \bigg(-\frac{\eps^2}{2}\Delta + V(x)\bigg)\sigma_0
	- \eps^2\alpha\begin{pmatrix} 0 & \ii\pa_{x_2}-\pa_{x_1} \\
	\ii\pa_{x_2}+\pa_{x_1} & 0 \end{pmatrix}.
\end{equation}
Here, the function $V(x)$ is the electric (gate) potential, $\sigma_0\in\C^{2\times 2}$
is the identity matrix, $\eps>0$ is the scaled Planck constant,
$\tau_0$ is a scaled time, and $\alpha>0$ is the scaled Rashba constant.
We refer to Section \ref{sec.scal} for details on the scaling.

Our derivation is based on the phase-space formulation using the Wigner transform
$W:=\mathcal{W}(\hat\rho)$, defined in \eqref{2.wigner} below. 
Then equation \eqref{1.vne}
transforms to the Wigner equation (see Lemma \ref{lem.w})
$$
  \tau_0\pa_t W + \T W = 0, \quad t>0,
$$
where $\T W = (\ii/\eps)(H_\eps\# W - W\# H_\eps)$ is the transport operator, 
$H_\eps = \mathcal{W}(\mathcal{H}_\eps)$ is the Hamiltonian symbol,
and $\#$ denotes the Moyal product defined in \eqref{def.moyal} below. To derive
diffusion equations, we introduce a collision term of Bhatnagar--Gross--Krook 
(BGK) type:
\begin{equation}\label{1.swe}
  \tau_0\pa_t W + \T W = \frac{1}{\tau}(\M(N)-W), \quad t>0,
\end{equation}
where $\tau>0$ is the (scaled) relaxation time, 
$\M(N)=\mathcal{W}(\exp(-\mathcal{H}_\eps+\tilde{A}))$ is the so-called 
quantum Maxwellian, which (formally) minimizes the quantum free energy
under the constraint of a given density matrix $N=\langle W\rangle:=\int_{\R^2}Wdp$,
and $\tilde{A}$ is the associated Lagrange multiplier;
see Section \ref{sec.QM} for details.

If the time scale $\tau_0$ is of the same order as the (scaled) relaxation time
$\tau$, we obtain a diffusive scaling. The usual way to derive a macroscopic model
is the Chapman--Enskog expansion. Let $W_\tau$ be a solution to \eqref{1.swe}
with $\tau_0=\tau$ and write $W_\tau=W_0+\tau G_\tau$ for two functions
$W_0$ and $G_\tau$. The formal limit $\tau\to 0$ in \eqref{1.swe} determines the
first function, $W_0=\M(N)$. 
The expansion $W_\tau=W_0+\tau G_\tau$ in fact defines $G_\tau$.
Inserting this expansion into \eqref{1.swe}, dividing by $\tau$, and performing
the formal limit $\tau\to 0$ leads to $G:=\lim_{\tau\to 0}G_\tau=-\T\M(N)$.
The last step is to integrate \eqref{1.swe} with respect to $p$,
$$
  \pa_t\langle W_\tau\rangle + \frac{1}{\tau}\langle\T\M(N)\rangle
	+ \langle\T G_\tau\rangle = 0,
$$
and to pass to the limit $\tau\to 0$. In the classical situation, 
$\T\M(N)$ is an odd function in $p$ and therefore, its integral with respect
to $p$ vanishes. Physically, this means that the equilibrium state $\M(N)$
has a vanishing diffusion current. The limit $\tau\to 0$ then leads to the
macroscopic model $\pa_t N - \langle\T\T\M(N)\rangle = 0$, where
$\langle\T\T\M(N)\rangle$ is a drift-diffusion term. In the present case, however,
it turns out that generally $\langle\T\M(N)\rangle \neq 0$ (see Lemma \ref{lem.comm}).
This means that there is a residual current in the equilibrium state that is due
to the spin-orbit interaction. We show in Lemma \ref{lem.comm} that the condition
$\langle\T\M(N)\rangle \neq 0$ can be characterized by the non-commutativity
between the density matrix $N$ and the Lagrange multiplier $\tilde{A}$.

Therefore, we impose a hydrodynamic scaling, which is suitable for local equilibria
with non-vanishing currents. We stress the fact that the BGK collisions do not
conserve the current. The residual current $\T\M(N)$ is of quantum mechanical nature and
in our case, it is of order $\eps^3$ (see \eqref{3.TM} and \eqref{4.ntimesa}). 
We suppose that $\tau_0$ is of order
one, while $\tau\ll 1$. Then the expansion $W_\tau=\M(N)+\tau G_\tau$ in \eqref{1.swe}
leads to $G_\tau=-\pa_t\M(N)-\T\M(N)+O(\tau)$. We integrate \eqref{1.swe} with
respect to $p$, divide the equation by $\tau$, and insert the expansion
$W_\tau=\M(N) - \tau(\pa_t\M(N) + \T\M(N)) + O(\tau^2)$:
$$
  \pa_t N = -\langle\T\M(N)\rangle
	= -\langle\T\M(N)\rangle + \tau\langle\T\pa_t\M(N)\rangle
	+ \tau\langle\T\T\M(N)\rangle + O(\tau^2).
$$
Neglecting terms of order $O(\tau^2)$, we arrive at our diffusion equation,
with the diffusion contained in the term $\langle\T\T\M(N)\rangle$. 
The task is to compute the expressions on the right-hand side in terms of the
density matrix and related variables. 

Our key assumption is 
that the spin density is of order $\eps\ll 1$. Physically, this means that the
system is in a mixed state; the spin direction of the electrons is random, and a small
polarisation emerges from the average. Mathematically, this assumption simplifies 
the semiclassical expansion of the model. Indeed, the explicit computations
appear to be impractical when the spin density is of the same order as the
charge density.

\subsection{Main results}

Expressing the density matrix in terms 
of the Pauli basis $\sigma_0,\ldots,\sigma_3\in\C^{2\times 2}$ 
(see Section \ref{sec.phase}),
we write $N=n_0\sigma_0+\eps\bm{n}\cdot\bm{\sigma}$, where the coefficients 
are the charge density $n_0$ and the spin density $\eps\bm{n}$, 
and $\bm{n}\cdot\bm{\sigma}:=\sum_{j=1}^3n_j\sigma_j$. 
Similarly, we write the Lagrange multiplier matrix as
$\tilde{A}=\tilde{a}_0\sigma_0+\tilde{\bm{a}}\cdot\bm{\sigma}$. 
We prove in Section \ref{sec.QM} that actually $\tilde{\bm{a}}=\eps\bm{a}$ is
of order $\eps$. Moreover, we show that the Pauli components of the density matrix 
solve a system of nonlocal diffusion equations.

\begin{theorem}[Nonlocal quantum-spin model]\label{thm.full}
Let $W$ be a solution to the Wigner--Boltzmann equation \eqref{2.wbe} and set
$N=\langle W\rangle = n_0\sigma_0 + \eps\bm{n}\cdot\bm{\sigma}$. Let $\M(N)$
be the quantum Maxwellian defined in Theorem \ref{thm.QM}, $\tilde{A}
=\tilde{a}_0\sigma_0+\eps\bm{a}\cdot\bm{\sigma}$ be the matrix of Lagrange
multipliers, and $\bm{J}=\langle p\bm{\M}(N)\rangle$ be the full current density.
Then, at first order in $\tau$, the Pauli components of $N$ solve the following
equations:
\begin{align}
  \pa_t n_0 &= \tau\diver\big(n_0\na a_0 + n_0\na V + \eps^2\bm{n}\cdot\na\bm{a}\big)
	+ 2\alpha\eps^2\tau\na^\perp\cdot(\bm{n}\times\bm{a}), \label{1.n0} \\
	\pa_t\bm{n} &= -2\bm{n}\times\bm{a} + \tau\diver\bigg(n_0\na\bm{a} + \bm{n}\na a_0
	+ \bm{n}\na V + \frac{2}{\eps}\bm{J}^T\times\bm{a}\bigg) \label{1.n} \\
	&\phantom{xx}{}- 2\alpha\tau\Big(n_0\na^\perp\times\bm{a}
	+ \na^\perp(a_0+V)\times\bm{n} - \frac{2}{\eps}\big(\bm{a}\langle p^\perp\cdot
	\M(N)\rangle + \bm{J}^T\bm{a}^\perp\big)\Big) \nonumber \\
	&\phantom{xx}{}- 4\eps\tau\bigg((\bm{n}\times\bm{a})\times\bm{a}
	+ \bm{n}\times\pa_t^0\bm{a})\bigg),
	\nonumber
\end{align}
where $a_0=\tilde{a}_0-V$, $\pa_t^0\bm{a}$ is the lowest-order approximation of 
$\pa_t\bm{a}$ with respect to $\tau$, 
and $\na^\perp=\na\times\bm{e}_3=(\pa_{x_2},-\pa_{x_1},0)^T$.
\end{theorem}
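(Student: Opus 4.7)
The plan is to carry out the Chapman--Enskog procedure sketched in the introduction rigorously at the symbol level, project on the Pauli basis, and reduce every right-hand-side quantity to the known symbol of $\M(N)$ via the Moyal expansion together with Lemma \ref{lem.comm}.

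First, I would set $\tau_0=1$ in \eqref{1.swe} and write the hydrodynamic ansatz $W_\tau=\M(N)+\tau G_\tau$. Using $\langle\M(N)\rangle=\langle W_\tau\rangle=N$ forces $\langle G_\tau\rangle=0$, so the moment of \eqref{1.swe} reduces to $\pa_t N+\langle\T W_\tau\rangle=0$. Substituting the expression $G_\tau=-\pa_t\M(N)-\T\M(N)+O(\tau)$, obtained by inserting the ansatz back into \eqref{1.swe}, yields
\begin{equation*}
\pa_t N = -\langle\T\M(N)\rangle + \tau\langle\T\pa_t\M(N)\rangle + \tau\langle\T\T\M(N)\rangle + O(\tau^2).
\end{equation*}
The three terms on the right supply, respectively, the $\tau$-free residual-current contribution $-2\bm n\times\bm a$ in \eqref{1.n}, a time-derivative correction containing $\pa_t^0\bm a$, and the drift-diffusion contributions in \eqref{1.n0}--\eqref{1.n}.

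Next, I would decompose every matrix in the Pauli basis: $N=n_0\sigma_0+\eps\bm n\cdot\bm\sigma$, $\tilde A=\tilde a_0\sigma_0+\eps\bm a\cdot\bm\sigma$, and $\M(N)=m_0\sigma_0+\eps\bm m\cdot\bm\sigma$, where $(m_0,\bm m)$ are determined by $(\tilde a_0,\bm a)$ and $V$ through the expansion of the quantum Maxwellian from Section \ref{sec.QM}. The leading residual current $-\langle\T\M(N)\rangle$ is then read off directly from Lemma \ref{lem.comm}: the Pauli-scalar part vanishes, which accounts for the absence of a $\tau$-free term in \eqref{1.n0}, while the identity
\begin{equation*}
[\bm n\cdot\bm\sigma,\bm a\cdot\bm\sigma] = 2\ii\,(\bm n\times\bm a)\cdot\bm\sigma
\end{equation*}
isolates the $-2\bm n\times\bm a$ contribution of \eqref{1.n}. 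For $\tau\langle\T\T\M(N)\rangle$ I would split $\T$ into its scalar kinetic-plus-potential part and its Rashba part, expand each Moyal commutator in $\eps$, and keep the contributions surviving the $p$-integration. The scalar-scalar terms generate the divergences $\diver(n_0\na a_0+n_0\na V+\cdots)$ and their spin analogues; the mixed scalar-Rashba terms, because the spin-orbit symbol depends linearly on $p$, survive $p$-integration after contraction with $\bm e_3$ and supply all $\na^\perp$ contributions, the $(2/\eps)\bm J^T\times\bm a$ term, and the remainder involving $\langle p^\perp\cdot\M(N)\rangle$ and $\bm J^T\bm a^\perp$. Finally, $\tau\langle\T\pa_t\M(N)\rangle$ is treated by replacing $\pa_t\bm a$ by its lowest-order approximation $\pa_t^0\bm a$ at this order in $\tau$; this produces the $\bm n\times\pa_t^0\bm a$ correction, while the $(\bm n\times\bm a)\times\bm a$ term emerges once one iterates $\pa_t\bm a=\pa_t^0\bm a+O(\tau)$ against the residual-current identity from Lemma \ref{lem.comm}.

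The main obstacle is the simultaneous book-keeping in $\eps$ and $\tau$. Because the spin density $\eps\bm n$ and the spin Lagrange multiplier $\eps\bm a$ are both scaled by $\eps$, while the spin-orbit part of $H_\eps$ carries an extra factor $\eps^2$, several contributions that would vanish by parity in the purely scalar setting become the leading surviving terms here, and one must track $\eps$-orders through cascaded Moyal expansions without losing the $O(\eps^3)$ residual current of Lemma \ref{lem.comm}. Once the correct leading power of $\eps$ has been identified for each entry and each Pauli component, the remaining algebra is forced by the two identities $[A\cdot\bm\sigma,B\cdot\bm\sigma]=2\ii(A\times B)\cdot\bm\sigma$ and $(A\cdot\bm\sigma)(B\cdot\bm\sigma)=(A\cdot B)\sigma_0+\ii(A\times B)\cdot\bm\sigma$, combined with the Pauli-component formulas for $\M(N)$ obtained in Section \ref{sec.QM}, rearranged into the divergence and cross-product shapes displayed in \eqref{1.n0}--\eqref{1.n}.
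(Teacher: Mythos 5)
Your plan is the paper's own: Chapman--Enskog with $W_\tau=\M(N)+\tau G_\tau$ and $\tau_0=1$, the moment identity $\pa_t N=-\langle\T\M(N)\rangle+\tau\langle\T\pa_t\M(N)\rangle+\tau\langle\T\T\M(N)\rangle+O(\tau^2)$, the commutator reduction $\ii\eps\,\T\M(N)=[\tilde A,\M(N)]_\#$ from Lemma~\ref{lem.comm}, and Pauli-component bookkeeping via the Moyal expansion and the $\theta_\eps$ identities. One small misattribution: the $(\bm n\times\bm a)\times\bm a$ piece in \eqref{1.n} arises by substituting $\pa_t^0\bm n=-2\bm n\times\bm a$ into $\pa_t\langle\T\M(N)\rangle=2\eps(\pa_t\bm n\times\bm a+\bm n\times\pa_t\bm a)\cdot\bm\sigma$, i.e.\ from the $\pa_t\bm n$ term, not from iterating $\pa_t\bm a$ against the residual-current identity.
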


The Lagrange multipliers $a_0$ and $\bm{a}$ are nonlocal functions of the densities
$n_0$ and $\bm{n}$ via the constraint $\langle\M(N)\rangle = N$. System
\eqref{1.n0}--\eqref{1.n} is formally closed but in a very implicit way. 
The proof of the theorem is based on a specification of the quantum Maxwellian 
$\M(N)$ and the transport operator $\T$ in terms of the Pauli basis. Our
arguments are only formal since a rigorous treatment is, even in simple cases,
out of reach. 
In the classical case $\eps=0$,
equation \eqref{1.n0} reduces to the standard drift-diffusion equation
$$
  \pa_t n_0 = \diver(n_0\na a_0 + n_0\na V), \quad\mbox{where }a_0=\log(n_0/(2\pi)).
$$
The terms involving $\alpha$ in \eqref{1.n0}--\eqref{1.n} are coming from the
Rashba interaction. The expression of order $\eps^{-1}$ in the second line of
\eqref{1.n} can be reformulated by using the Grassmann vector identity as
$$
  \bm{a}\langle p^\perp\cdot\M(N)\rangle + \bm{J}^T\bm{a}^\perp
	= \langle p^\perp\times(\bm{a}\times\M(N)\rangle.
$$
This is exactly the corresponding expression in the model of
\cite[Formula (24)]{BHJ21}.

Since the semiclassical expansion of $\bm{a}$ is
$\bm{a}=\bm{n}/n_0+O(\eps^2)$ (Lemma \ref{lem.a}),
we can write \eqref{1.n0}--\eqref{1.n}, up to $O(\eps^2)$, 
as the following cross-diffusion system:
$$
  \pa_t\begin{pmatrix} n_0 \\ \bm{n}\end{pmatrix}
	= \tau\diver\left(\begin{pmatrix} 1-\eps^2|\bm{n}/n_0|^2 & \eps^2\bm{n}^T/n_0 \\
	-\bm{n}/n_0 & \mathbb{I} \end{pmatrix}
	\na\begin{pmatrix} n_0 \\ \bm{n}\end{pmatrix}\right)
	+ f(n_0,\bm{n},\na n_0,\na\bm{n}),
$$
where $\mathbb{I}$ is the identity matrix in $\R^{3\times 3}$ and 
$f$ contains the lower-order terms. The density matrix is positive definite
if $\eps|\bm{n}|<n_0$, and under this condition, the real parts
of the eigenvalues of the diffusion matrix are positive.
This indicates that the nonlocal system is of parabolic type
in the sense of Petrovskii.

Our second main result is a semiclassical expansion, up to second order,
of the nonlocal model \eqref{1.n0}--\eqref{1.n}.

\begin{theorem}[Local quantum-spin model]\label{thm.semi}
Let $N=n_0\sigma_0+\eps\bm{n}\cdot\bm{\sigma}$ be a solution to 
\eqref{1.n0}--\eqref{1.n}. Then $(n_0,\bm{n})$ solves, neglecting terms of order
$O(\alpha^m\eps^n)$ with $m+n>2$,
\begin{align}
  \pa_t n_0 &= \tau\diver\bigg(\na n_0+n_0\na V - \frac{\eps^2}{6}n_0
	\na\frac{\Delta\sqrt{n_0}}{\sqrt{n_0}}\bigg), \label{1.n0loc} \\
	\pa_t\bm{n} &= \tau\diver(\na\bm{n}+\bm{n}\na V) 
	- 2\alpha\tau(2\na^\perp\times\bm{n}+\na^\perp V\times\bm{n})
	- 4\alpha^2\tau(2\bm{n}+\bm{n}^{\perp\perp}) \label{1.nloc} \\
	&\phantom{xx}{}+ \frac{\eps^2}{6}\frac{\bm{n}}{n_0}\times B(N)
	+ \frac{\eps^2\tau}{12}\diver\big(\bm{n}A(N) - \na\Delta\bm{n} + \na\bm{n}C(N)
	+ B(N)\na n_0 + D(N)\big) \nonumber \\
	&\phantom{xx}{}+ \frac{\eps^3\tau}{3}\bm{n}\times\bigg(\frac{\bm{n}}{n_0}
	\times B(N) - B(N)\bigg), \nonumber 
\end{align}
where $\bm{n}^{\perp\perp}=(-n_1,-n_2,0)^T$,
\begin{align*}
  A(N) &= 2\bigg|\frac{\na n_0}{n_0}\bigg|^2\frac{\na n_0}{n_0}
	- 4\frac{\bm{n}}{n_0}\cdot\frac{\na\bm{n}}{n_0}
	- \frac{\na n_0}{n_0}\frac{\Delta n_0}{n_0}
	- \frac{\na n_0}{n_0}\frac{(\na\otimes\na)n_0}{n_0}, \\
	B(N) &= \frac{\Delta\bm{n}}{n_0} - \frac{\na\bm{n}}{n_0}\cdot\frac{\na n_0}{n_0}
	= \diver\bigg(\frac{\na\bm{n}}{n_0}\bigg), \\
	C(N) &= \bigg(\frac{\Delta n_0}{n_0} - \bigg|\frac{\na n_0}{n_0}\bigg|^2
	+ 4\bigg|\frac{\bm{n}}{n_0}\bigg|^2\bigg)\sigma_0
	+ \frac{(\na\otimes\na)n_0}{n_0}, \\
	D(N) &= \frac{\na n_0}{n_0}\na\otimes\na \bm{n}
	- \bigg(\na\bm{n}\cdot\frac{\na n_0}{n_0}\bigg)\frac{\na n_0}{n_0}.
\end{align*}
\end{theorem}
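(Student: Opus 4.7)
The plan is to obtain the local system by substituting the semiclassical expansions of the Lagrange multipliers $a_0,\bm{a}$ and of the momentum moments $\bm{J}$ and $\langle p^\perp\cdot\M(N)\rangle$ into the nonlocal equations \eqref{1.n0}--\eqref{1.n}, and then discarding all contributions of combined order $\alpha^m\eps^n$ with $m+n>2$.

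First, I would assemble the necessary $\eps$-expansions. Lemma \ref{lem.a} already supplies $\bm{a}=\bm{n}/n_0+O(\eps^2)$, and the $\eps^2$-correction can be computed by linearising the constraint $\langle\M(N)\rangle=N$ in the $\bm{\sigma}$-components. For the scalar part one needs the classical Bohm-potential expansion
\[
a_0=\log\!\Big(\frac{n_0}{2\pi}\Big)+V\text{-independent quantum corrections}=\log\!\Big(\frac{n_0}{2\pi}\Big)-\frac{\eps^2}{6}\,\frac{\Delta\sqrt{n_0}}{\sqrt{n_0}}+O(\eps^4),
\]
obtained from the standard $\#$-expansion of $\Exp(-H_\eps+\tilde A)$ inserted into the $\sigma_0$-component of the constraint. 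Integrating the same symbol against $p$ and $p^\perp$ produces the required expansions of $\bm{J}$ and $\langle p^\perp\cdot\M(N)\rangle$; these have to be carried one order in $\eps$ beyond the leading one, in order to cancel the explicit $1/\eps$ appearing in \eqref{1.n}.

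Second, I would process \eqref{1.n0}. Inserting the Bohm-level expansion of $a_0$, the term $n_0\na a_0$ becomes $\na n_0-(\eps^2/6)\,n_0\na(\Delta\sqrt{n_0}/\sqrt{n_0})$, which together with $n_0\na V$ reproduces the right-hand side of \eqref{1.n0loc}. The remaining contributions $\eps^2\bm{n}\!\cdot\!\na\bm{a}$ and $2\alpha\eps^2\tau\,\na^\perp\!\cdot(\bm{n}\times\bm{a})$ are of combined order at least three (because $\bm{n}$ counts as small in the spin-polarisation hierarchy) and are dropped.

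Third, I would process \eqref{1.n} line by line. The algebraic precession $-2\bm{n}\times\bm{a}$ vanishes to leading order because $\bm{a}=\bm{n}/n_0$; the $O(\eps^2)$ correction to $\bm{a}$, whose explicit form involves $B(N)=\diver(\na\bm{n}/n_0)$, delivers the source $(\eps^2/6)(\bm{n}/n_0)\times B(N)$ and, combined with the cubic part of $\bm{a}$, the $\eps^3\tau$-precession in the last line of \eqref{1.nloc}. The divergence $\diver(n_0\na\bm{a}+\bm{n}\na a_0+\bm{n}\na V+(2/\eps)\bm{J}^T\!\times\bm{a})$ collapses, at leading order, to $\diver(\na\bm{n}+\bm{n}\na V)$; at order $\eps^2$, substituting the Bohm-level expansions of $a_0$, $\bm{a}$, and $\bm{J}$ and collecting derivatives yields precisely the combination $\bm{n}A(N)-\na\Delta\bm{n}+\na\bm{n}\,C(N)+B(N)\na n_0+D(N)$ of \eqref{1.nloc}. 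Finally, the Rashba block $-2\alpha\tau\big(n_0\na^\perp\!\times\bm{a}+\na^\perp(a_0+V)\times\bm{n}-(2/\eps)(\bm{a}\langle p^\perp\!\cdot\M(N)\rangle+\bm{J}^T\bm{a}^\perp)\big)$ requires that the $1/\eps$ singularity be cancelled by the $O(\eps)$ piece of the momentum moments; after this cancellation, the surviving leading part produces $-2\alpha\tau(2\na^\perp\!\times\bm{n}+\na^\perp V\times\bm{n})-4\alpha^2\tau(2\bm{n}+\bm{n}^{\perp\perp})$, while $O(\alpha\eps^2)$ and $O(\alpha^2\eps)$ contributions are discarded.

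The main obstacle is the explicit second-order semiclassical expansion of $\M(N)$ and of its $p$- and $p^\perp$-moments. This requires iterating the Moyal-product expansion of $\Exp(-H_\eps+\tilde A)$, separating its Pauli components, and then inverting the constraint $\langle\M(N)\rangle=N$ to express $a_0$ and $\bm{a}$ as local functionals of $n_0,\bm{n}$ and their derivatives. A secondary but delicate point is the $1/\eps$ cancellation in the Rashba line of \eqref{1.n}, which only becomes manifest once the first subleading correction to $\bm{J}$ and to $\langle p^\perp\cdot\M(N)\rangle$ has been written out in terms of $\bm{n}$, $\na\bm{n}$, and $\na n_0$; the Grassmann identity recalled after Theorem \ref{thm.full} is the key algebraic tool to rewrite the resulting combination in the compact form of \eqref{1.nloc}.
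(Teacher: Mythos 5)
Your overall plan---expand $\M(N)$ via the Moyal product, invert the constraint $\langle\M(N)\rangle=N$ to obtain $a_0,\bm{a}$ as local functionals of $n_0,\bm{n}$, substitute into the nonlocal model, and collect orders---is exactly the paper's strategy (Proposition~\ref{prop.max}, Lemma~\ref{lem.a}, then term-by-term substitution). However, there is a genuine flaw in your accounting of orders.

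You assert that ``$\bm{n}$ counts as small in the spin-polarisation hierarchy'' and on this basis drop $\eps^2\bm{n}\cdot\na\bm{a}$ from \eqref{1.n0}. This misreads the setup: in $N=n_0\sigma_0+\eps\bm{n}\cdot\bm{\sigma}$ the smallness is entirely carried by the explicit factor $\eps$, and $\bm{n}$ itself is $O(1)$. Consequently $\eps^2\bm{n}\cdot\na\bm{a}$ is $O(\eps^2)$ ($m+n=2$) and must be kept. In the paper's Lemma~\ref{lem.a} the scalar multiplier is
\[
a_0=\log\frac{n_0}{2\pi}-\eps^2\bigg\{\frac{1}{6}\frac{\Delta\sqrt{n_0}}{\sqrt{n_0}}
+\frac12\Big|\frac{\bm{n}}{n_0}\Big|^2+\alpha^2\bigg\}+O(\eps^4),
\]
whereas your Bohm-potential expansion of $a_0$ omits the spin-dependent piece $-\frac{\eps^2}{2}|\bm{n}/n_0|^2$. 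These two omissions are not independent: $n_0\na\big(\frac12|\bm{n}/n_0|^2\big)=\bm{n}\cdot\na(\bm{n}/n_0)$, so dropping the $|\bm{n}/n_0|^2$-term in $a_0$ and dropping $\eps^2\bm{n}\cdot\na\bm{a}$ cancel each other in the $n_0$-equation, which is why you still arrive at \eqref{1.n0loc}. But this is an accidental cancellation of two errors, not a valid simplification, and the same omission in the spin equation \eqref{1.n} would \emph{not} self-cancel: the term $\bm{n}\na a_0$ there picks up $-\frac{\eps^2}{2}\bm{n}\,\na|\bm{n}/n_0|^2$, which contributes to the $-4(\bm{n}/n_0)\cdot(\na\bm{n}/n_0)$ piece of $A(N)$ and to the $4|\bm{n}/n_0|^2$ piece of $C(N)$ in \eqref{1.nloc}. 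Without the full $a_0^{(2)}$ you cannot reproduce $A(N)$ and $C(N)$ as stated. To repair the argument you should compute $a_0^{(2)}$ and $\bm{a}^{(2)}$ completely from the conditions $\langle\M^{(2)}\rangle=0$, $\langle\M^{(3)}\rangle=0$ (as in the paper), keep $\eps^2\bm{n}\cdot\na\bm{a}$ in \eqref{1.n0}, and only discard contributions whose combined order in $(\alpha,\eps)$ genuinely exceeds two.

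A smaller remark: the $1/\eps$ in the Rashba block of \eqref{1.n} is not a singularity requiring delicate cancellation---the spin part $\bm{\M}(N)$ is $O(\eps)$ (its leading term is $\eps\exp(h_0)\bm{h}_1\cdot\bm{\sigma}$), so $\bm{J}=\langle p\,\bm{\M}(N)\rangle$ and $\langle p^\perp\cdot\M(N)\rangle$ are already $O(\eps)$ and the quotient is automatically $O(1)$. The expansion still has to be carried to the next order to capture the $\alpha^2$ and $\eps^2$ corrections, as you say, but framing it as a cancellation of a $1/\eps$ singularity is misleading.
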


Equation \eqref{1.n0loc} for $n_0$ is decoupled from \eqref{1.nloc}. It 
corresponds to the quantum drift-diffusion or density-gradient model \cite{AnIa89}.
The spin density satisfies, at lowest order in $\alpha$ and $\tau$,
a drift-diffusion equation. In the general case $\eps>0$, equation \eqref{1.nloc} 
is a parabolic equation of fourth order
with $-\Delta^2\bm{n}$ being the highest-order derivative term. The (formal) proof
of Theorem \ref{thm.semi} is based on the semiclassical expansion of the
quantum Maxwellian and the Lagrange multipliers $a_0$ and $\bm{a}$.
Here, the assumption of small polarizations is crucial to be able to compute
the expressions in a suitable way.

\subsection{Comparison with models in the literature}

The local model \eqref{1.n0loc}--\eqref{1.nloc} includes other equations in the
literature as special cases. First, we claim that if $n_1$ and $n_2$ vanish,
then $n_\pm=n_0\pm\eps n_3$ solve, up to order $O(\eps^2)$,
the {\em two-component spinorial quantum drift-diffusion equations}
\begin{equation}\label{1.npm}
  \pa_t n_\pm = \tau\diver(\na n_\pm + n_\pm\na V)
	- \frac{\eps^2}{6}\tau\diver\bigg(n_\pm\na
	\frac{\Delta\sqrt{n_\pm}}{\sqrt{n_\pm}}\bigg)
	- 4\alpha^2\tau(n_\pm - n_\mp).
\end{equation}
Indeed, the third component of \eqref{1.nloc} equals in case $n_1=n_2=0$,
$$
  \pa_t n_3 = \tau\diver(\na n_3+n_3\na V) - 8\alpha^2\tau n_3 + O(\eps^2).
$$
Adding this equation to or subtracting it from \eqref{1.n0loc} gives
$$
  \pa_t(n_0\pm \eps n_3) = \tau\diver\big(\na(n_0\pm \eps n_3)+(n_0\pm \eps n_3)
	\na V\big) - \frac{\eps^2}{6}\diver\bigg(n_0\na\frac{\Delta\sqrt{n_0}}{\sqrt{n_0}}
	\bigg) - 8\alpha^2\eps\tau n_3 + O(\eps^3).
$$
Replacing $n_3=(n_+ - n_-)/(2\eps)$ and expanding
$\sqrt{n_0} = \sqrt{n_\pm}\sqrt{1 \mp \eps n_3/n_\pm} = \sqrt{n_\pm} + O(\eps)$,
then gives \eqref{1.npm} up to order $O(\eps^2)$. 
Equation \eqref{1.npm} corresponds to the two-component
drift-diffusion model for the spin-up and spin-down densities $n_+$ and
$n_-$, respectively, which was derived in \cite[Theorem~2]{BaMe10} from
the Wigner--BGK model.
The expression $J_\pm=\na n_\pm + n_\pm\na V$ is the classical
contribution of the spin-up/spin-down current density. 
The second term on the right-hand side of
\eqref{1.npm} can be interpreted as a quantum current including the Bohm
potential $\Delta\sqrt{n_\pm}/\sqrt{n_\pm}$. The equations are weakly coupled
through the last term, which expresses the well-known D'yakonov--Perel' spin 
relaxation. The spin drift-diffusion model with $\eps=0$ was suggested in
\cite{ZFD02} and mathematically analyzed in \cite{Gli08,GlGa10}.
Model \eqref{1.npm} in one space dimension and with nonlinear diffusion 
corresponds to the bipolar quantum drift-diffusion equations
that were analyzed in \cite{ChCh09}.

Second, 
we observe that equation \eqref{1.n0loc} for $n_0$ is decoupled from \eqref{1.nloc}
since it does not contain the spin density $\bm{n}$. In fact, both equations
are completely decoupled in the limit $\eps\to 0$. Indeed, in this limit,
equations \eqref{1.n0loc}--\eqref{1.nloc} become the {\em spin-vector drift-diffusion
model}
\begin{align}
  \pa_t n_0 &= \tau\diver(\na n_0+n_0\na V), \label{1.n0elhajj} \\
  \pa_t\bm{n} &= \tau\diver(\na\bm{n}+\bm{n}\na V) - 2\alpha\tau(2\na^\perp\times\bm{n}
	+ \na^\perp V\times\bm{n}) - 4\alpha^2\tau(2\bm{n}+\bm{n}^{\perp\perp}).
	\label{1.nelhajj}
\end{align}
These equations correspond to the model of \cite[Section 4.3]{BHJ21}
and to the semiclassical drift-diffusion equations derived in \cite{ElH14}
in the case of constant relaxation time and purely spin-orbit interaction field.
The charge density $n_0$ satisfies the standard drift-diffusion equation
for semiconductors. Since
$$
  \na^\perp\times\bm{n}
	= \pa_{x_1}\begin{pmatrix} n_3 \\ 0 \\-n_1 \end{pmatrix}
	+ \pa_{x_2}\begin{pmatrix} 0 \\ n_3 \\ -n_2 \end{pmatrix}
  = \diver\begin{pmatrix} n_3 & 0 \\ 0 & n_3 \\ -n_1 & -n_2 \end{pmatrix},
$$
the spin current diffuses according to the classical drift-diffusion current
and an additional current, coming from the spin-orbit interaction. 
The equation for $\bm{n}$ also contains the gate control term 
$-2\alpha\tau\na^\perp V\times\bm{n}$, which expresses the capability to
control the spin by means of an applied voltage, and the relaxation term
$- 4\alpha^2\tau(2\bm{n}+\bm{n}^{\perp\perp})=-4\alpha^2\tau(n_1,n_2,2n_3)^T$.

A related spin-vector model was derived in \cite{ZaJu13}, leading to \eqref{1.n0elhajj}
and an equation similar to \eqref{1.nelhajj}. The difference to the model
of \cite{ZaJu13} is that there, quantum effects are taken into account but only
up to first order. Indeed, it is assumed in \cite{ZaJu13} that $\eps$ and $\tau$
are of the same order such that second-order effects, like the quantum Bohm
potential, cannot be seen in this approach.

Third, the nonlocal model \eqref{1.n0}--\eqref{1.n} reduces in the spinless case
to the following nonlocal equation for the charge density:
$$
  \pa_t n_0 = \tau\diver(n_0\na(a_0+V)), \quad
	n_0 = \langle\M(N)\rangle 
	= \bigg\langle\mathcal{W}\bigg(\exp\mathcal{W}^{-1}\bigg(-\frac{|p|^2}{2}+a_0
	\bigg)\bigg)\bigg\rangle,
$$
which was derived in \cite{DGM07} as the {\em entropic quantum drift-diffusion model}.
An interesting feature of this model is that the macroscopic quantum free energy
$$
  E = -\int_{\R^2}n_0(a_0+V)dx
$$
is a decreasing function of time:
\begin{align*}
  \frac{dE}{dt} &= \int_{\R^2}\big(\pa_t n_0(a_0+V) + n_0\pa_t(a_0+V)\big)dx
	= \int_{\R^2}\pa_t n_0(a_0+V+1)dx \\
	&= -\int_{\R^2}n_0|\na(a_0+V)|^2 dx \le 0.
\end{align*}
Here, we have used the property that the derivative
$\pa_t n_0$ equals $n_0\pa_t a_0$ (this is basically a consequence of
\cite[Lemma 3.3]{DeRi03}).

\subsection*{Notation}

We summarize some notation used in this paper. Bold face letters indicate
vectors in $\R^3$ like $\bm{a}=(a_1,a_2,a_3)^T\in\R^3$. We write
$\langle f\rangle=\int_{\R^2}fdp$ and introduce the notation
$$
  p^\perp=(p_2,-p_1,0)^T, \quad\bm{n}^{\perp\perp}=(-n_1,-n_2,0)^T, \quad
  \na^\perp_x=(\pa_{x_2},-\pa_{x_1},0)^T.
$$
If clear from the context, we write $\na$ instead of $\na_x$. The
partial derivative with respect to $x_i$ or $p_k$ is denoted by
$\pa_{x_i}$ or $\pa_{p_k}$, respectively, and $\pa^2_{x_i x_k}$ is
a second-order partial derivative.


The paper is organized as follows. 
In Section \ref{sec.prep}, we present some background material, in particular
the von Neumann and Wigner equations, the Moyal product and its properties, 
and introduce the quantum Maxwellian $\M(N)$ and the Wigner--BGK model \eqref{1.swe}. 
The nonlocal quantum model
of Theorem \ref{thm.full} is derived in Section \ref{sec.full}, while the
semiclassical expansion leading to the local quantum model of Theorem \ref{thm.semi}
is performed in Section \ref{sec.semi}. The appendices collect some technical
proofs, namely the formal solution of the quantum maximum entropy problem
leading to the quantum Maxwellian and its semiclassical expansion.


\section{Background material}\label{sec.prep}

In this section, we detail the scaling of the von Neumann equation,
introduce the phase-space formulation, 
and define the Moyal product and the quantum Maxwellian.

\subsection{Scaling}\label{sec.scal}

The confined electrons move in the plane $x=(x_1,x_2,0)^T$ with the momentum
$p=(p_1,p_2,0)^T$. The electron spin, however, is a vector in $\R^3$ having
generally nonvanishing components. An electron in the $(x_1,x_2)$-plane with
Rashba interaction is described by the Hamiltonian
$$
  \mathcal{H} = \bigg(-\frac{\hbar^2}{2m}\Delta + qV(x)\bigg)\sigma_0
	- \hbar\alpha_R\begin{pmatrix} 0 & \ii\pa_{x_2}-\pa_{x_1} \\
	\ii\pa_{x_2}+\pa_{x_1} & 0 \end{pmatrix},
$$
where the parameters are the reduced Planck constant $\hbar$, the electron mass $m$,
the elementary charge $q$, and the Rashba constant $\alpha_R>0$. 
Furthermore, $V(x)$ is the given electric 
potential and $\sigma_0\in\C^{2\times 2}$ is the identity matrix. 
The evolution of the electrons is governed by
the von Neumann equation for the density operator $\hat\rho(t)$, 
which is a positive trace-class operator on $L^2(\R^2;\C^2)$,
$$
  \ii\hbar\pa_t\hat\rho = [\mathcal{H},\hat\rho], \quad t>0.
$$

This equation can be written in dimensionless form by introducing the
reference length $x_0$ (e.g.\ the device diameter), 
time $t_0$, and density $N_0$. We choose the
thermal momentum $p_0=\sqrt{mk_BT_0}$ (where $k_B$ is the Boltzmann constant
and $T_0$ the background temperature), the reference potential $V_0=p_0^2/(mq)$,
and the energy time $t_E=mx_0/p_0$. The energy time corresponds to the time
that a typical electron with energy $k_BT_0$ needs to cross the device.
The time $t_0$ denotes another time scale and will be discussed in Section \ref{sec.QM}.
Then, using the same notation for the
unscaled and scaled variables, the scaled von Neumann equation becomes
\eqref{1.vne}, the scaled Hamiltonian equals \eqref{1.H},
and the scaled energy time, Planck constant, and Rashba constant are given by,
respectively,
$$
  \tau_0 = \frac{t_E}{t_0}, \quad \eps = \frac{\hbar}{x_0p_0}, \quad
	\alpha = \frac{mx_0\alpha_R}{\hbar}.
$$

\subsection{Phase-space formulation}\label{sec.phase}

For the asymptotic analysis, it is convenient to
work with phase-space functions instead of density operators. We use the
Wigner transformation to transform a density operator into the phase-space-type Wigner
function. Of course, due to Heisenberg's uncertainty principle, it is impossible
to have have a phase-space description in quantum mechanics. The 
Wigner function is formally similar to a phase-space distribution, and the
Wigner transformation can be considered as a tool to simplify the calculations
and to obtain a classical-like physical intuition behind the mathematical
manipulations. For details, we refer to \cite{Jue09,LoMo13}. 

The density operator $\hat\rho$ in \eqref{1.vne} is a 
(time-dependent) Hilbert--Schmidt
operator on the space $L^2(\R^2;\C^2)$ \cite[Chap.~6]{ReSi72}. It is uniquely
determined by its kernel $\rho\in L^2(\R^2\times\R^2;\C^{2\times 2})$
satisfying
$$
  (\hat\rho\psi)(x) = \int_{\R^2}\rho(x,y)\psi(y)dy \quad\mbox{for }
	\psi\in L^2(\R^2;\C^2).
$$
The Wigner transform $\mathcal{W}(\rho)$ is a matrix-valued function
of the phase-space variables $(x,p)$, defined by
\begin{equation}\label{2.wigner}
  \mathcal{W}(\hat\rho)(x,p) = \int_{\R^2}
	\rho\bigg(x+\frac{\eta}{2},x-\frac{\eta}{2}\bigg)e^{-\ii\eta\cdot p/\eps}
	d\eta.
\end{equation}
Note that the integration domain is $\R^2$ and not $\R^3$, since the
electron system is confined in the two-dimensional plane with respect to $x$ and $p$.
The Wigner transform defined for Hilbert--Schmidt operators can be extended
to a wider class of distributional phase-space functions \cite{Fol89}. 
In such an extended setting, the Wigner transformation is the inverse
of the Weyl quantization, which assigns to a phase-space function (or distribution)
a quantum operator and which is defined for suitable Wigner functions $W$ by
$$
  \mathcal{W}^{-1}(W)(x,y) = \frac{1}{(2\pi\eps)^2}\int_{\R^2}
	W\bigg(\frac{x+y}{2},p\bigg)e^{\ii (x-y)\cdot p/\eps}dp.
$$
In the literature, often the expression {\em symbols} is
used for phase-space functions (or distributions) associated to operators via
Wigner--Weyl transforms, while the expression {\em Wigner function} is reserved
to those symbols that are the Wigner transforms of density operators. 

Let $W=\mathcal{W}(\rho)$ be a symbol associated to a density operator
(i.e.\ a Wigner function). We can express $W$ in the terms of the Pauli basis,
$$
  W(x,p) = \sum_{j=0}^3w_j(x,p)\sigma_j 
	=: w_0(x,p)\sigma_0 + \bm{w}(x,p)\cdot\bm{\sigma},
$$
where the Pauli matrices
$$
  \sigma_0 = \begin{pmatrix} 1 & 0 \\ 0 & 1 \end{pmatrix}, \quad
  \sigma_1 = \begin{pmatrix} 0 & 1 \\ 1 & 0 \end{pmatrix}, \quad
	\sigma_2 = \begin{pmatrix} 0 & -\ii \\ \ii & 0 \end{pmatrix}, \quad
	\sigma_3 = \begin{pmatrix} 1 & 0 \\ 0 & -1 \end{pmatrix}
$$
are a basis of Hermitian matrices of $\C^{2\times 2}$. 
For instance, the Wigner transform of the Hamiltonian \eqref{1.H} equals
\begin{equation}\label{2.Heps}
\begin{aligned}
  & H_\eps = \mathcal{W}(\mathcal{H}_\eps)(x,p) 
	= \eta_0\sigma_0 + \bm{\eta}\cdot\bm{\sigma}, \\
	& \mbox{where }\eta_0(x,p):=\tfrac12|p|^2+V(x),\quad 
	\bm{\eta}(x,p):=\alpha p^\perp, 
\end{aligned}
\end{equation}
and $p^\perp:=p\times\bm{e}_3=(p_2,-p_1,0)^T$, $\bm{e}_3=(0,0,1)^T$.

The Pauli algebra is quite convenient for mathematical manipulations.
For instance, we note the following rule. Let 
$A=a_0\sigma_0+\bm{a}\cdot\bm{\sigma}$ and $B=b_0\sigma_0+\bm{b}\cdot\bm{\sigma}$
be two matrices in $\C^{2\times 2}$. Then
\begin{equation}\label{2.pauli}
  AB = (a_0b_0+\bm{a}\cdot\bm{b})\sigma_0 + (a_0\bm{b}+b_0\bm{a}+\ii\bm{a}\times\bm{b})
	\cdot\bm{\sigma}, \quad \operatorname{tr}(AB) = 2(a_0b_0+\bm{a}\cdot\bm{b}).
\end{equation}

\subsection{Moyal product}

The Moyal product appears when transforming the von Neumann equation \eqref{1.vne}
to the Wigner equation. In fact, the concatenation of operators translates
into the Moyal product of the Wigner transforms. We refer to \cite{Fol89}
for proofs of the results mentioned in this section. For two symbols
$f$, $g\in L^2(\R^2\times\R^2;\C)$, the Moyal product is defined as the generalized
convolution
\begin{align}
  (f\#g)(x,p) &= \frac{1}{(\pi\eps)^4}\int_{\R^2}\int_{\R^2}\int_{\R^2}\int_{\R^2}
	f(x_1,p_1)g(x_2,p_2) \label{def.moyal} \\
	&\phantom{xx}{}\times
	\exp\bigg(\frac{2\ii}{\eps}\big((x-x_2)\cdot p_1 + (x_1-x)\cdot p_2 
	- (x_1-x_2)\cdot p\big)\bigg)dx_1dp_1dx_2dp_2. \nonumber
\end{align}

\begin{lemma}\label{lem.hash}
Let $\hat\rho_1$, $\hat\rho_2$ be two density operators on $L^2(\R^2;\C^2)$. Then
$$
  \mathcal{W}(\hat\rho_1\hat\rho_2) = \mathcal{W}(\hat\rho_1)\#
	\mathcal{W}(\hat\rho_2).
$$
Furthermore, if $f$, $g$ are two symbols with values in $\C$ then
\begin{equation}\label{moyal.int}
  \int_{\R^2}\int_{\R^2}(f\# g)(x,p)dxdp 
	= \int_{\R^2}\int_{\R^2}f(x,p)g(x,p)dxdp.
\end{equation}
\end{lemma}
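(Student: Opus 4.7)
The plan is to prove the two statements of Lemma \ref{lem.hash} as follows.

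For the first assertion, I would start from the operator level. The kernel of the product $\hat\rho_1\hat\rho_2$ is the composition
\[
(\rho_1\rho_2)(x,y) = \int_{\R^2}\rho_1(x,z)\rho_2(z,y)\,dz,
\]
which, inserted into the definition \eqref{2.wigner} with the splitting $x\pm\eta/2$, gives
\[
\mathcal{W}(\hat\rho_1\hat\rho_2)(x,p) = \int_{\R^2}\!\int_{\R^2}\rho_1\bigl(x+\tfrac{\eta}{2},z\bigr)\rho_2\bigl(z,x-\tfrac{\eta}{2}\bigr)e^{-\ii\eta\cdot p/\eps}\,dz\,d\eta.
\]
Then I would replace each kernel by its inverse Wigner representation using the formula for $\mathcal{W}^{-1}$ stated in Section~\ref{sec.phase} applied to $W_j:=\mathcal{W}(\hat\rho_j)$. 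After substituting, the integrand becomes a product of two symbols together with exponentials in all phase-space variables. The midpoint arguments of $W_1$ and $W_2$ suggest the changes of variables $x_1=\frac12(x+\eta/2+z)$, $x_2=\frac12(z+x-\eta/2)$ (and corresponding momenta $p_1,p_2$ for the inverse-transform integrations), which rewrite the combined phase exactly as the symplectic form appearing in \eqref{def.moyal}. Collecting the Jacobians and prefactors $1/(2\pi\eps)^2$ from each inverse transform yields the overall $1/(\pi\eps)^4$ of the Moyal product.

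For the second identity, I would integrate the definition \eqref{def.moyal} directly against $dx\,dp$ and exchange the order of integration, which is legitimate for $f,g\in L^2$ in the distributional/oscillatory sense used in \cite{Fol89}. The exponent in \eqref{def.moyal} is linear in $x$ and $p$:
\[
\frac{2\ii}{\eps}\bigl(x\cdot(p_1-p_2)+x_1\cdot(p_2-p)+x_2\cdot(p-p_1)\bigr).
\]
Integrating over $x$ produces $(\pi\eps)^2\delta(p_1-p_2)$, which collapses the $p_2$-integral and sets $p_1=p_2$. Subsequently integrating over $p$ produces $(\pi\eps)^2\delta(x_1-x_2)$, collapsing the $x_2$-integral. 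These two factors of $(\pi\eps)^2$ exactly cancel the prefactor $(\pi\eps)^{-4}$, leaving $\int\!\int f(x_1,p_1)g(x_1,p_1)\,dx_1\,dp_1$, which is the claimed identity.

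The main obstacle is the bookkeeping: tracking the multiple integration variables, their linear recombinations, and the normalization factors so that the phase in the transformed integral of part~(i) matches the Moyal phase verbatim, and so that the two delta functions in part~(ii) absorb the prefactor $(\pi\eps)^{-4}$ with no stray constant. Justifying the exchanges of integration in part~(ii) is a distributional matter that is handled in the standard way, e.g.\ by regularization or by working with the oscillatory-integral interpretation in \cite{Fol89}, so I would merely invoke that reference rather than reprove it.
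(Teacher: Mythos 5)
Your proposal is correct and follows essentially the same route the paper has in mind when it says the lemma is "formally proved by straightforward calculations using the Weyl quantization": for the product formula you substitute the inverse Weyl representations of the two kernels into the Wigner transform of the composed kernel, change variables to the midpoints $x_1,x_2$ (the Jacobian factor $16$ combining with the two prefactors $(2\pi\eps)^{-2}$ to produce $(\pi\eps)^{-4}$), and read off the Moyal phase; for the integral identity you integrate the defining expression and use $\int_{\R^2}e^{(2\ii/\eps)x\cdot q}\,dx=(\pi\eps)^2\delta(q)$ twice, with the two factors $(\pi\eps)^2$ cancelling the prefactor exactly as you say. Both computations and the bookkeeping of constants check out.
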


The lemma is formally proved by straightforward calculations 
using the Weyl quantization.

For the next result, we need a multi-index notation. Let $\mu=(\mu_1,\mu_2)
\in\N_0^2$ be a multiindex with order $|\mu|=\mu_1+\mu_2$ and factorial
$\mu!=\mu_1!\mu_2!$ and let the partial derivative $\pa_x^\mu$ be
an abbreviation for $\pa^{|\mu|}/(\pa_{x_1}^{\mu_1}\pa_{x_2}^{\mu_2})$
and similarly for $\pa_p^\mu$.
The Moyal product has the following semiclassical expansion.

\begin{lemma}\label{lem.moyaleps}
Let $f$, $g$ be two symbols. Then
\begin{align*}
  & (f\# g)(x,p) = \sum_{j=0}^\infty\eps^j (f\#_{j}g)(x,p), \quad\mbox{where} \\
	& (f\#_{j}h)(x,p) = \frac{1}{(2\ii)^j}\sum_{|\mu|+|\nu|=j}
	\frac{(-1)^{|\mu|}}{\mu!\nu!}\pa_x^\mu\pa_p^\nu f(x,p)
	\pa_p^\mu\pa_x^\nu g(x,p).
\end{align*}
\end{lemma}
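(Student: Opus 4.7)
The plan is to reduce the defining oscillatory integral \eqref{def.moyal} to a symmetric symplectic form, Taylor expand both symbols around $(x,p)$, and evaluate the resulting Gaussian-type moment integrals via distributional identities.

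First I would perform the translation $x_i = x+y_i$, $p_i = p+q_i$ for $i=1,2$, which is volume-preserving. A direct calculation shows that the linear-in-$p$ and linear-in-$x$ terms cancel, collapsing the phase to the purely symplectic expression
$$
  (x-x_2)\cdot p_1 + (x_1-x)\cdot p_2 - (x_1-x_2)\cdot p \;=\; y_1\cdot q_2 - y_2\cdot q_1,
$$
so that
$$
  (f\# g)(x,p) = \frac{1}{(\pi\eps)^4}\int f(x+y_1,p+q_1)\,g(x+y_2,p+q_2)\,
	e^{(2\ii/\eps)(y_1\cdot q_2-y_2\cdot q_1)}\,dy_1\,dq_1\,dy_2\,dq_2.
$$

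Next I would Taylor expand both symbols around $(x,p)$ in the variables $(y_i,q_i)$, namely
$$
  f(x+y_1,p+q_1) \;=\; \sum_{\mu,\nu}\frac{y_1^\mu q_1^\nu}{\mu!\,\nu!}\,
	\pa_x^\mu\pa_p^\nu f(x,p),
$$
and analogously for $g$ with multi-indices $(\mu',\nu')$. The four integrals then decouple in pairs, and each pair reduces to a distributional moment via
$$
  \int_{\R^2} y^\mu\,e^{2\ii y\cdot q/\eps}\,dy
	\;=\; (\pi\eps)^2\Big(\tfrac{\eps}{2\ii}\Big)^{|\mu|}\pa_q^\mu\delta(q),
$$
which follows by differentiating the Fourier identity $\int e^{2\ii y\cdot q/\eps}dy=(\pi\eps)^2\delta(q)$ under the integral sign. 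Using $\int q^{\nu'}\pa_q^\mu\delta(q)\,dq = (-1)^{|\mu|}\mu!\,\delta_{\mu,\nu'}$, only the terms with $\mu=\nu'$ and $\mu'=\nu$ survive, and after cancelling the $(\pi\eps)^4$ prefactor one is left with
$$
  (f\# g)(x,p) = \sum_{\mu,\nu}\frac{(-1)^{|\mu|}}{(2\ii)^{|\mu|+|\nu|}\mu!\,\nu!}\,
	\eps^{|\mu|+|\nu|}\,\pa_x^\mu\pa_p^\nu f(x,p)\,\pa_p^\mu\pa_x^\nu g(x,p).
$$
Grouping by $j=|\mu|+|\nu|$ then yields the claimed expansion.

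The main obstacle is conceptual rather than computational: the expansion is only asymptotic, not convergent, so the equality $f\# g=\sum_j\eps^j f\#_j g$ must be understood as an asymptotic identity modulo $O(\eps^{J+1})$ with a remainder controlled by higher derivatives of $f$ and $g$. A rigorous justification requires placing $f$ and $g$ in a suitable symbol class (e.g.\ H\"ormander's $S^m$) and quantifying the error from truncating the Taylor series inside the oscillatory integral. Since the formal series is all that is needed for the Chapman--Enskog procedure in the sequel, I would only sketch the asymptotic character of the identity and not pursue the remainder bound in detail.
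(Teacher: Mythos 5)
The paper does not actually supply a proof of Lemma~\ref{lem.moyaleps}: at the start of the Moyal-product subsection it simply refers the reader to \cite{Fol89}. Your proposal therefore fills a gap rather than paralleling an argument in the text. Your derivation is the standard Fourier-analytic one and it is correct. Concretely, the translation $x_i=x+y_i$, $p_i=p+q_i$ indeed cancels all linear-in-$p$ and $y_i\cdot p$ terms and collapses the phase to the symplectic pairing $y_1\cdot q_2-y_2\cdot q_1$; the resulting oscillatory integral factors over the pairs $(y_1,q_2)$ and $(y_2,q_1)$; and the distributional identities you cite (in $\R^2$, $\int e^{2\ii y\cdot q/\eps}\,dy=(\pi\eps)^2\delta(q)$ and $\int q^{\nu'}\pa_q^\mu\delta\,dq=(-1)^{|\mu|}\mu!\,\delta_{\mu,\nu'}$) pin $\nu'=\mu$, $\mu'=\nu$. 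I checked the bookkeeping: the extra $(-1)^{|\mu'|}$ that arises from integrating against $\pa_{q_1}^{\mu'}\delta$ in the $(y_2,q_1)$ factor is cancelled by the minus sign in the exponent $e^{-2\ii y_2\cdot q_1/\eps}$, so one lands exactly on the coefficient $(-1)^{|\mu|}\eps^{|\mu|+|\nu|}\big/\big((2\ii)^{|\mu|+|\nu|}\mu!\,\nu!\big)$, as claimed. Your closing caveat — that the equality is an asymptotic expansion and that a rigorous statement would require a symbol class and a remainder bound — is the appropriate qualification; the paper sidesteps this entirely by working at the formal level, which is consistent with the Chapman--Enskog procedure that follows.
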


The first two terms in the sum are the normal multiplication and the Poisson
bracket, respectively:
\begin{equation}\label{moyal.two}
  f\#_{0}g = fg, \quad
	f\#_{1}g = \frac{1}{2\ii}(\na_p f\cdot\na_x g - \na_x f\cdot\na_p g).
\end{equation}

If $A=(A_{ij})$, $B=(B_{ij})$ are two matrix-valued symbols with values in
$\C^{2\times 2}$, we define its Moyal product as
$(A\# B)_{ij} = \sum_{k=1}^2A_{ik}\# B_{kj}$.
Formulating $A=a_0\sigma_0+\bm{a}\cdot\bm{\sigma}$ and 
$B=b_0\sigma_0+\bm{b}\cdot\bm{\sigma}$ in the Pauli components, the matrix
Moyal product can be written in the Pauli basis as
\begin{equation}\label{moyal.AB}
  A\# B = (a_0\# b_0 + \bm{a}\cdot_\#\bm{b})\sigma_0
	+ (a_0\#\bm{b} + \bm{a}\# b_0 + \ii\bm{a}\times_\#\bm{b})\cdot\bm{\sigma},
\end{equation}
where ``$\cdot_\#$'' and ``$\times_\#$'' are the inner and cross products
on $\R^3$, respectively, where the multiplication is replaced by the Moyal product.

Given two symbols $f$ and $g$, we define the odd and even Moyal product by
\begin{equation}\label{moyal.odd}
  f\#_{\rm odd}g = \frac12(f\# g-g\# f), \quad
	f\#_{\rm even}g = \frac12(f\# g+g\# f).
\end{equation}
Let $V=V(x)$ and $f=f(x,p)$ be two symbols. We define the potential operator
\begin{align*}
  & (\theta_\eps[V]f)(x,p) = \frac{1}{(2\eps)^2}\int_{\R^2}\int_{\R^2}
	\delta_\eps[V](x,\eta)f(x,p')e^{-\ii(p-p')\cdot\eta}d\eta dp', \\
	& \mbox{where}\quad \delta_\eps[V] = \frac{1}{\ii\eps}\bigg(
	V\bigg(x+\frac{\eps}{2}\eta\bigg) - V\bigg(x-\frac{\eps}{2}\eta\bigg)\bigg).
\end{align*}

\begin{lemma}\label{lem.moyal}
Let $V=V(x)$ and $f=f(x,p)$ be two symbols. Then
\begin{align}
  & \ii\eps\theta_\eps[V]f = 2V\#_{\rm even}f, \label{2.theta} \\
	& \langle\theta_\eps[V](f)\rangle = 0, \quad
	\langle p\theta_\eps[V](f)\rangle = -\na_x V(f). \label{2.theta2} 
\end{align}
\end{lemma}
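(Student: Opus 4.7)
The plan for \eqref{2.theta} is to compute the Moyal products $V\#f$ and $f\#V$ directly from the definition \eqref{def.moyal}. Since $V=V(x)$ is independent of the momentum variables, one of the momentum integrations in each case collapses to a Dirac mass in the corresponding spatial variable, reducing the fourfold integral to a twofold one. After a change of variables of the form $\eta=2(x_1-x)/\eps$ (respectively $\eta=2(x-x_2)/\eps$ for $f\#V$), both products can be put into the common form
\begin{equation*}
  C\int_{\R^2}\int_{\R^2} V\bigl(x\pm\tfrac{\eps}{2}\eta\bigr)\,f(x,p')\,e^{-\ii(p-p')\cdot\eta}\,d\eta\,dp'
\end{equation*}
with a single prefactor $C$. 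Forming the combination indicated in \eqref{moyal.odd} produces the finite difference $V(x+\tfrac{\eps}{2}\eta)\pm V(x-\tfrac{\eps}{2}\eta)$, and recognizing $V(x+\tfrac{\eps}{2}\eta)-V(x-\tfrac{\eps}{2}\eta)=\ii\eps\,\delta_\eps[V](x,\eta)$ matches the defining expression for $\theta_\eps[V]f$, reducing \eqref{2.theta} to a comparison of constants.

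For \eqref{2.theta2}, the strategy is to integrate the defining integral for $\theta_\eps[V]f$ against $1$ and against $p$. For the first identity, Fubini together with $\int_{\R^2}e^{-\ii(p-p')\cdot\eta}\,dp=(2\pi)^2\delta(\eta)$ reduces the $\eta$-integration to evaluation at $\eta=0$; since $\delta_\eps[V](x,0)=0$ by inspection, the result is immediate. For the moment identity, I would decompose $p=(p-p')+p'$: the $p'$-piece vanishes by the argument just given, while for the $(p-p')$-piece one uses $\int_{\R^2}(p-p')\,e^{-\ii(p-p')\cdot\eta}\,dp=\ii(2\pi)^2\na_\eta\delta(\eta)$. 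An integration by parts in $\eta$ transfers $\na_\eta$ onto $\delta_\eps[V](x,\eta)$, and a direct computation gives $\na_\eta\delta_\eps[V](x,\eta)\big|_{\eta=0}=\na_x V(x)/\ii$, which assembles into the claimed expression after collecting the constants and the remaining $p'$-integration of $f$.

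The main obstacle I anticipate is bookkeeping. The several Fourier inversions, changes of variables, and distributional derivatives each contribute factors of $2\pi$, $\eps$, and $\ii$, and confirming that they combine to match the specific prefactors in \eqref{def.moyal} and in the definition of $\theta_\eps[V]$ is tedious; one must also be careful with the sign and parity conventions so that the resulting Moyal combination matches the side expected for a commutator-type operator. None of the individual steps is deep, but the constant-chasing is where errors are easiest to make.
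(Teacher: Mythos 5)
Your plan for \eqref{2.theta} is the same as the paper's: compute $V\#f$ and $f\#V$ directly from \eqref{def.moyal}, exploit the fact that $V$ is $p$-independent to collapse the integrals, change variables to $\eta$, and recognize the difference quotient $\delta_\eps[V]$. Your instinct to take the \emph{minus} combination is exactly right: the computation in the paper's proof actually evaluates $V\# f - f\# V$, i.e.\ $2V\#_{\rm odd}f$, which is what is later used in the proof of Lemma \ref{lem.w}; the ``$\#_{\rm even}$'' appearing in the lemma statement and in the opening line of the paper's proof is a typo. (There is also a typo in the definition of $\theta_\eps[V]$: the prefactor should be $1/(2\pi)^2$, not $1/(2\eps)^2$, which is required for the constants to match after the change of variables.)

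For \eqref{2.theta2}, the paper simply cites \cite[Lemma 12.9]{Jue09}, whereas you give a direct argument. Your sketch is correct: $\int e^{-\ii(p-p')\cdot\eta}dp=(2\pi)^2\delta(\eta)$ together with $\delta_\eps[V](x,0)=0$ gives the zeroth moment; writing $p=(p-p')+p'$, using $\int (p-p')e^{-\ii(p-p')\cdot\eta}dp=\ii(2\pi)^2\na_\eta\delta(\eta)$, integrating by parts in $\eta$, and evaluating $\na_\eta\delta_\eps[V](x,0)=\na_x V/\ii$ indeed assembles (with the sign from the integration by parts and the extra factor of $\ii$) to $-\na_x V\,\langle f\rangle$. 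So the proposal is correct, follows the paper for \eqref{2.theta}, and supplies the omitted computation for \eqref{2.theta2}.
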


\begin{proof}
Using the definition of the Moyal product, it follows after suitable substitutions
that
\begin{align*}
  2(V\#_{\rm even}f)(x,p) &= (V\# f - f\# V)(x,p) \\
	&= \frac{1}{(2\pi)^2}\int_{\R^2}\int_{\R^2}\bigg(V\bigg(x+\frac{\eps}{2}\eta\bigg)
	- V\bigg(x-\frac{\eps}{2}\eta\bigg)\bigg)f(x,p')e^{\ii\eta\cdot(p'-p)}d\eta dp' \\
	&= \ii\eps(\theta_\eps[V]f)(x,p). 
\end{align*}
A formal proof of \eqref{2.theta2} can be found in \cite[Lemma 12.9]{Jue09}.
\end{proof}

By Lemma \ref{lem.moyaleps}, the operator $\theta_\eps[V]$ can be expanded as
$$
  \theta_\eps[V]f = \na_x V\cdot\na_p f + O(\eps^2),
$$
which shows that it reduces in the limit $\eps\to 0$ to the classical drift term 
appearing in kinetic theory.

Let $\hat\rho$ be a density operator on $L^2(\R^2;\C^2)$ with
Wigner function $W=\mathcal{W}(\hat\rho)$. Then
\begin{equation*}
  \operatorname{Tr}(\hat\rho) = \frac{1}{(2\pi\eps)^2}\operatorname{tr}
	\int_{\R^2}\int_{\R^2}W(x,p)dxdp,
\end{equation*}
where ``Tr'' is the operator trace and ``tr'' the matrix trace.
Furthermore, let $\hat\rho_1$ and $\hat\rho_2$ be two density
operators and let
$W_1=\mathcal{W}(\hat\rho_1)$, $W_2=\mathcal{W}(\hat\rho_2)$ be the associated
Wigner functions. Then it follows from \eqref{moyal.int} and
$\mathcal{W}(\hat\rho_1\hat\rho_2)=W_1\# W_2$ that
\begin{equation}\label{2.tr}
  \operatorname{Tr}(\hat\rho_1\hat\rho_2)
	= \frac{1}{(2\pi\eps)^2}\operatorname{tr}\int_{\R^2}\int_{\R^2}W_1(x,p)W_2(x,p)dxdp.
\end{equation}

The Moyal product allows us to formulate the von Neumann equation in the
phase-space setting. 

\begin{lemma}\label{lem.w}
Let $\hat\rho$ be a solution to the von Neumann equation \eqref{1.vne}
and $W=\mathcal{W}(\hat\rho)$ be its Wigner function. Then $W$ solves
$$
  \tau_0\pa_t W + \T W = 0, \quad t>0,
$$
where $\T W:=(\ii/\eps)[H_\eps,W]_\#=(\ii/\eps)(H_\eps\# W - W\# H_\eps)$. 
Furthermore, the Pauli components
of $W=w_0\sigma_0+\bm{w}\cdot\bm{\sigma}$ solve
\begin{align}
  \tau_0\pa_t w_0 + p\cdot\na_x w_0 + \alpha\eps\na_x^\perp\cdot\bm{w} 
	- \theta_\eps[V]w_0 &= 0, \label{2.w0} \\
	\tau_0\pa_t\bm{w} + p\cdot\na_x\bm{w} + \alpha\eps\na_x^\perp w_0
	- \theta_\eps[V]\bm{w} - 2\alpha p^\perp\times\bm{w} &= 0, \quad t>0, \label{2.w}
\end{align}
recalling that $\na_x^\perp=(\pa_{x_2},-\pa_{x_1},0)^T$ and
$p^\perp=(p_2,-p_1,0)^T$.
\end{lemma}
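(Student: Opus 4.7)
The plan is to apply the Wigner transform to the von Neumann equation \eqref{1.vne} and then project the resulting equation onto the Pauli basis. By linearity and since $\mathcal{W}$ commutes with $\pa_t$, applying $\mathcal{W}$ to both sides of \eqref{1.vne} and invoking Lemma \ref{lem.hash} (which translates operator composition into the Moyal product) yields $\ii\eps\tau_0\pa_t W = H_\eps\# W - W\# H_\eps$. Dividing by $\ii\eps$ and moving the commutator to the left-hand side produces the transport form $\tau_0\pa_t W + \T W = 0$ with $\T W = (\ii/\eps)[H_\eps,W]_\#$, which settles the first assertion.

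For the Pauli components I would insert $H_\eps = \eta_0\sigma_0+\bm{\eta}\cdot\bm{\sigma}$ from \eqref{2.Heps} and $W = w_0\sigma_0 + \bm{w}\cdot\bm{\sigma}$ into this equation, apply the Pauli-basis Moyal product formula \eqref{moyal.AB} to both $H_\eps\# W$ and $W\# H_\eps$, and subtract to obtain
\begin{align*}
  [H_\eps,W]_\#
  &= \bigl([\eta_0,w_0]_\# + \bm{\eta}\cdot_\#\bm{w} - \bm{w}\cdot_\#\bm{\eta}\bigr)\sigma_0 \\
  &\quad + \bigl([\eta_0,\bm{w}]_\# + [\bm{\eta},w_0]_\#
        + \ii\bm{\eta}\times_\#\bm{w} - \ii\bm{w}\times_\#\bm{\eta}\bigr)\cdot\bm{\sigma}.
\end{align*}
Each remaining Moyal commutator can then be computed in closed form because the symbols in $H_\eps$ are either polynomials of low degree in $p$ or functions of $x$ alone, so that the semiclassical expansion of Lemma \ref{lem.moyaleps} terminates. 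Only the $j=1$ term survives in $[\tfrac12|p|^2, f]_\# = (\eps/\ii)\, p\cdot\na_x f$; Lemma \ref{lem.moyal} supplies $[V,f]_\# = \ii\eps\,\theta_\eps[V]f$; and because $\bm{\eta}=\alpha p^\perp$ is linear in $p$ and $x$-independent, $[\bm{\eta}, w_0]_\#$ collapses to the single first-order term proportional to $\na_x^\perp w_0$. Inserting these pieces into the scalar part and multiplying by $\ii/\eps$ reproduces \eqref{2.w0}.

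The main obstacle is the antisymmetric combination $\bm{\eta}\times_\#\bm{w} - \bm{w}\times_\#\bm{\eta}$, because the classical identity $\bm a\times\bm b + \bm b\times \bm a = 0$ does not produce the usual cancellation under the non-commutative $\#$. Relabelling the Levi--Civita indices yields
$$
  \bigl(\bm{\eta}\times_\#\bm{w} - \bm{w}\times_\#\bm{\eta}\bigr)_k
  = \sum_{i,j}\epsilon_{kij}\bigl(\eta_i\#w_j + w_j\#\eta_i\bigr)
  = 2\sum_{i,j}\epsilon_{kij}(\eta_i\#_{\rm even}w_j),
$$
so here it is the \emph{symmetric} part of the Moyal product that survives, opposite to the scalar situation. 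Because $\bm{\eta}$ is linear in $p$ and $x$-independent, its even Moyal product with any symbol reduces to the ordinary pointwise product, so the right-hand side equals $2\bm{\eta}\times\bm{w}=2\alpha\, p^\perp\times\bm{w}$. Collecting this contribution with those from $[\eta_0,\bm{w}]_\#$ and $[\bm{\eta}, w_0]_\#$, multiplying by $\ii/\eps$, and matching Pauli components produces the Rashba precession term $-2\alpha\,p^\perp\times\bm{w}$ together with the remaining terms of \eqref{2.w}, completing the derivation.
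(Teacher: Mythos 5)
Your argument follows essentially the same route as the paper's proof: Wigner-transform the von Neumann equation, invoke Lemma~\ref{lem.hash} to get the Moyal commutator, insert the Pauli decompositions of $H_\eps$ and $W$, apply \eqref{moyal.AB}, and then evaluate each piece term by term, exploiting that $\eta_0$ and $\bm\eta$ are low-degree in $p$. Your observation that the antisymmetric combination $\bm\eta\times_\#\bm w-\bm w\times_\#\bm\eta$ collapses to the \emph{symmetric} (even) part of the Moyal product, unlike the scalar commutators which give the odd part, is precisely the key structural point the paper records as $\ii\bm\eta\times_{\#_{\rm even}}\bm w$; you just derive it by an explicit Levi--Civita relabelling, which is a nice way to make it transparent.

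One caveat you should be aware of concerns the powers of $\eps$. You quote $\bm\eta=\alpha p^\perp$ from \eqref{2.Heps} and then assert that, after multiplying by $\ii/\eps$, you recover the stated coefficients of \eqref{2.w0}--\eqref{2.w}. Chasing the factors carefully, though: with $\bm\eta=\alpha p^\perp$ the odd product $2\bm\eta\cdot_{\#_{\rm odd}}\bm w$ is $O(\eps)$ (the single $\#_1$ term carries one factor $\eps$), so after dividing by $\ii\eps$ it contributes $\alpha\na^\perp_x\cdot\bm w$, not $\alpha\eps\na^\perp_x\cdot\bm w$; and the even cross product is $O(1)$, so after dividing by $\ii\eps$ the precession term would be $-(2\alpha/\eps)\,p^\perp\times\bm w$, not $-2\alpha\,p^\perp\times\bm w$. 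In fact, the scaled Hamiltonian \eqref{1.H} carries $\eps^2\alpha$ in front of the first-order Rashba differential operator, whose Weyl symbol is $p^\perp\cdot\bm\sigma/\eps$ up to sign, so the correct symbol coefficient is $\bm\eta=\eps\alpha\,p^\perp$; with that value, all the powers of $\eps$ land exactly on \eqref{2.w0}--\eqref{2.w}. This appears to be a typo in \eqref{2.Heps}, which propagates into the paper's own displayed intermediate steps, so the confusion is not yours alone; but your write-up does inherit it, and as stated the $\eps$-bookkeeping in your final ``collecting'' step does not actually close without this correction. Lastly, a small terminological slip: \eqref{2.theta} in Lemma~\ref{lem.moyal} says $\#_{\rm even}$ but the identity you use, $[V,f]_\#=\ii\eps\theta_\eps[V]f$, is of course the \emph{odd} part $2V\#_{\rm odd}f$, consistent with what the lemma's proof actually computes.
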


The lemma shows that the transport operator can be written as 
\begin{align}
  \T W &= \big(p\cdot\na_x + \alpha\eps\na_x^\perp\cdot\bm{w} 
	- \theta_\eps[V]w_0\big)\sigma_0 \label{2.TW} \\
	&\phantom{xx}{}+ \big(p\cdot\na_x\bm{w} + \alpha\eps\na_x^\perp w_0
	- \theta_\eps[V]\bm{w} - 2\alpha p^\perp\times\bm{w}\big)\cdot\bm{\sigma}.
	\nonumber
\end{align}

\begin{proof}[Proof of Lemma \ref{lem.w}]
Applying the Wigner transform to \eqref{1.vne}, 
\begin{equation}\label{2.HW}
  \ii\eps\tau_0\pa_t W = \mathcal{W}(\ii\eps\tau_0\pa_t\hat\rho) 
	= \mathcal{W}(\mathcal{H}_\eps\hat\rho)
	- \mathcal{W}(\hat\rho\mathcal{H}_\eps) = H_\eps\# W - W\# H_\eps, 
\end{equation}
where $H_\eps$ is given by \eqref{2.Heps}.
Then \eqref{moyal.AB}, \eqref{moyal.odd}, and an elementary computation show that
$$
  H_\eps\# W - W\# H_\eps = 2\big(\eta_0\#_{\rm odd}w_0 + \bm{\eta}\cdot_{\#_{\rm odd}}
	\bm{w}\big)\sigma_0 + 2\big(\eta_0\#_{\rm odd}\bm{w} + \bm{\eta}\#_{\rm odd}w_0
	+ \ii\bm{\eta}\times_{\#_{\rm even}}\bm{w}\big)\cdot\bm{\sigma},
$$
where $\eta_0$ and $\bm{\eta}$ are defined in \eqref{2.Heps}.
Comparing the Pauli components of the left-hand side of \eqref{2.HW}, written as
$\ii\eps\tau\pa_t(w_0\sigma_0 + \bm{w}\cdot\bm{\sigma})$, with those from
the right-hand side, we find that
\begin{align*}
  \ii\eps\tau_0\pa_t w_0 &= \eta_0\#_{\rm odd}w_0 
	+ \bm{\eta}\cdot_{\#_{\rm odd}}\bm{w}, \\
	\ii\eps\tau_0\pa_t\bm{w} &= \eta_0\#_{\rm odd}\bm{w} + \bm{\eta}\#_{\rm odd}w_0
	+ \ii\bm{\eta}\times_{\#_{\rm even}}\bm{w}.
\end{align*}
It remains to evaluate the right-hand sides. 
It follows from \eqref{2.theta} that $2V\#_{\rm odd}w_0=\ii\eps\theta_\eps[V]w_0$.
Furthermore, since the derivatives of $|p|^2/2$ of order higher than two vanish,
the Moyal product $|p|^2\#_{\rm odd}w_0$ reduces to $|p|^2\#_1 w_0
= -\ii\eps p\cdot\na_x w_0$ (see \eqref{moyal.two}). Hence,
$$
  \eta_0\# w_0 = \ii\eps(\theta_\eps[V]w_0 - p\cdot\na_x w_0).
$$
The higher-order derivatives of $\bm{\eta}$ vanish too such that
$$
  2\bm{\eta}\cdot_{\#_{\rm odd}}\bm{w} = -\ii\alpha\eps\sum_{j=1}^2
	\na_p \eta_j\cdot\na_x w_j = -\ii\alpha\eps(\pa_{x_2}w_1-\pa_{x_1}w_2)
	= -\ii\alpha\eps\na_x^\perp\cdot\bm{w}.
$$
Collecting the last two displayed expressions, we obtain \eqref{2.w0}. 

Similarly as above, we have
$$
  2\eta_0\#\bm{w} = \ii\eps(\theta[V]\bm{w}-p\cdot\na_x\bm{w}), \quad
	2\bm{\eta}\cdot_{\#_{\rm odd}}w_0 = -\ii\alpha\eps\na_x^\perp w_0,
$$
where $p\cdot\na_x\bm{w}=\sum_{j=1}^2 p_j\pa_{x_j}\bm{w}$. 
Again, since the higher-order
derivatives of $\bm{\eta}$ vanish, only the lowest-order term
of the even Moyal cross product $\bm{\eta}\times_{\#_{\rm even}}\bm{w}$ remains:
$$
  \ii\bm{\eta}\times_{\#_{\rm even}}\bm{w} 
	= \ii\bm{\eta}\times_{\#_{0}}\bm{w} = \ii\bm{\eta}\times\bm{w}
	= \ii\alpha p^\perp\times\bm{w}.
$$
We deduce \eqref{2.w} from the last three displayed expressions, finishing the proof.
\end{proof}


\subsection{Quantum Maxwellian and Wigner--Boltzmann equation}\label{sec.QM}

The local equilibrium state of the electron gas is assumed to be the minimizer
of the quantum entropy functional (if it exists) under the constraints of
given macroscopic densities \cite{DeRi03}. 
The quantum maximum entropy problem means that the
collisions drive the system towards the most probable state compatible with
the observed densities. The entropy functional is the quantum free energy
$$
  \mathcal{G}(\hat\rho) = \operatorname{Tr}(\hat\rho\log\hat\rho - \hat\rho
	+ \mathcal{H}_\eps\hat\rho),
$$
where Tr is the operator trace, log is the operator logarithm, 
and $\mathcal{H}_\eps$ is the Hamiltonian \eqref{1.H}. Note that the
operator logarithm is well defined for positive definite density operators.
To formulate the entropy functional in the phase space, we introduce the
quantum exponential and quantum logarithm according to \cite{DMR05} by
$$
  \Exp(W) := \mathcal{W}(\exp \mathcal{W}^{-1}(W)), \quad
	\Log(W) := \mathcal{W}(\log \mathcal{W}^{-1}(W)),
$$
where exp is the exponential operator. We deduce from identity \eqref{2.tr} that
\begin{equation}\label{2.E}
  \mathcal{E}(W) := \mathcal{G}(\hat\rho)
	= \frac{1}{(2\pi\eps^2}\operatorname{tr}\int_{\R^2}\int_{\R^2}
	(W\Log W-W+H_\eps W)dxdp,
\end{equation}
where $H_\eps$ is defined in \eqref{2.Heps} and $W=\mathcal{W}(\hat\rho)$. 

\begin{definition}[Quantum maximum entropy problem]
Given the numbers $n_0$ and $\bm{n}=(n_1,n_2,n_3)^T\in\R^3$ satisfying
$\eps|\bm{n}|<n_0$, 
we wish to find the Wigner function $W^*$ such that $\mathcal{E}(W^*)$ is minimal among
all symbols $W=w_0\sigma_0+\bm{w}\cdot\bm{\sigma}$ such that $\mathcal{W}^{-1}(W)$
is positive definite and 
$$
  \langle w_0\rangle = n_0, \quad \langle\bm{w}\rangle = \eps\bm{n}.
$$
\end{definition}

Observe that we introduced a smallness condition on the spin components.
We suppose that the spin vector is of the order of the scaled Planck constant.
This condition simplifies the semiclassical expansion, and it
implies that the density matrix $N:=\langle W\rangle 
= n_0\sigma_0+\eps\bm{n}\cdot\bm{\sigma}$ is positive definite.
The positive definiteness condition on $\mathcal{W}^{-1}(W)$ guarantees that
the quantum logarithm is well defined. 

\begin{theorem}\label{thm.QM}
If the quantum maximum entropy problem has a solution $\M(N)=\M_0\sigma_0
+ \bm{\M}\cdot\bm{\sigma}$, then it is necessarily of the form
$$
  (\M(N))(x,p) = \Exp\big(-H_\eps(x,p) + \tilde{a}_0(x)\sigma_0
	+ \eps\bm{a}(x)\cdot\bm{\sigma}\big),
$$
where $\tilde{a}_0$ and $\bm{a}$
are real Lagrange multipliers. The solution satisfies the constraints
$$
  n_0 = \frac12\langle\operatorname{tr}(\M(N)\sigma_0)\rangle 
	= \langle\M_0\rangle, \quad
	\eps n_j = \frac12\langle\operatorname{tr}(\M(N)\sigma_j)\rangle, \quad j=1,2,3.
$$
\end{theorem}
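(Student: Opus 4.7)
The plan is to treat this as a constrained minimization with four scalar constraints and apply the Lagrange multiplier method, where the real delicacy lies in computing the first variation of $\mathcal{E}(W)$ since $\Log$ is defined through inverse Weyl quantization and does not act pointwise on $W$. I would introduce a scalar multiplier $\tilde{a}_0(x)$ for $\langle w_0\rangle = n_0$ and a vector multiplier $\tilde{\bm{a}}(x)$ for $\langle\bm{w}\rangle = \eps\bm{n}$, and form the Lagrangian
\begin{equation*}
\mathcal{L}(W) = \mathcal{E}(W) - \int_{\R^2}\tilde{a}_0\bigl(\langle w_0\rangle - n_0\bigr)dx - \int_{\R^2}\tilde{\bm{a}}\cdot\bigl(\langle\bm{w}\rangle - \eps\bm{n}\bigr)dx.
\end{equation*}

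To compute $\delta\mathcal{E}(W)$, I would detour through the operator picture, where $\mathcal{E}(W) = \mathcal{G}(\hat\rho) = \operatorname{Tr}(\hat\rho\log\hat\rho - \hat\rho + \mathcal{H}_\eps\hat\rho)$ with $\hat\rho = \mathcal{W}^{-1}(W)$. The standard trace identity $\operatorname{Tr}(\hat\rho\,\delta\log\hat\rho) = \operatorname{Tr}(\delta\hat\rho)$, which follows by differentiating $\hat\rho = e^{\log\hat\rho}$ under the trace and using cyclicity (or from the integral representation of the operator logarithm), gives $\delta\mathcal{G}(\hat\rho) = \operatorname{Tr}((\log\hat\rho + \mathcal{H}_\eps)\delta\hat\rho)$. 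Pulling this back to phase space via identity \eqref{2.tr} produces
\begin{equation*}
\delta\mathcal{E}(W) = \frac{1}{(2\pi\eps)^2}\operatorname{tr}\int_{\R^2}\int_{\R^2}(\Log W + H_\eps)\,\delta W\,dx\,dp.
\end{equation*}

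Next I would impose $\delta\mathcal{L}(W) = 0$, expand $\Log W + H_\eps$ and $\delta W$ in the Pauli basis, and use the trace formula $\operatorname{tr}(AB) = 2(a_0b_0 + \bm{a}\cdot\bm{b})$ from \eqref{2.pauli} to decouple the scalar and vector components. Arbitrariness of $\delta w_0$ and $\delta\bm{w}$ then forces each Pauli component of $\Log W + H_\eps$ to equal a function of $x$ alone, yielding
\begin{equation*}
\Log\M(N) = -H_\eps + \tilde{a}_0(x)\sigma_0 + \tilde{\bm{a}}(x)\cdot\bm{\sigma},
\end{equation*}
after absorbing the constant $(2\pi\eps)^2/2$ into the multipliers. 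Applying $\Exp$ and renaming $\tilde{\bm{a}} = \eps\bm{a}$ (anticipating the smallness inherited from $\langle\bm{w}\rangle = \eps\bm{n}$, to be justified separately in Lemma~\ref{lem.a}) yields the claimed form. The explicit constraints follow immediately from $\operatorname{tr}(\M\sigma_0)/2 = \M_0$ and $\operatorname{tr}(\M\sigma_j)/2 = \M_j$ together with $\langle\M_0\rangle = n_0$ and $\langle\M_j\rangle = \eps n_j$.

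The main obstacle is the variation of $\Log W$, which is nonlocal in phase space because $\Log$ is defined by Wigner--Weyl conjugation; the operator-picture detour, combined with Lemma~\ref{lem.hash} and \eqref{2.tr}, is the cleanest way to sidestep it. I would also flag that only necessary conditions are being derived --- existence of a minimizer is not addressed --- so the admissibility assumption that $\mathcal{W}^{-1}(W)$ is positive definite, built into the problem statement, is precisely what guarantees that $\Log$ is well defined along the variation path.
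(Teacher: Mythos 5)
Your variational argument reproduces Step 1 of the paper's proof in Appendix~\ref{app.QM}: introduce the Lagrangian $\mathcal{L}(W,\tilde A)=\mathcal{E}(W)-\mathcal{K}(W)$, compute $\delta_W\mathcal{E}$ via the operator picture (the paper invokes \cite[Lemma~3.3]{DeRi03} for essentially the same trace identity $\operatorname{Tr}(\hat\rho\,\delta\log\hat\rho)=\operatorname{Tr}(\delta\hat\rho)$), arrive at $\Log W+H_\eps=\tilde A$, and recover the constraints from $\delta_{\tilde A}\mathcal{L}=0$. That part is sound and matches the paper's route.

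The genuine gap is the final "renaming" $\tilde{\bm a}=\eps\bm a$. You flag it, but you defer it to Lemma~\ref{lem.a}, and that is circular: Lemma~\ref{lem.a} and Proposition~\ref{prop.max} already assume the decomposition \eqref{2.HepsA}, $-H_\eps+\tilde A=h_0\sigma_0+\eps\bm h_1\cdot\bm\sigma$ with $\bm h_1=\bm a-\alpha p^\perp$, i.e.\ they take as input exactly the $O(\eps)$ scaling of the spin multiplier that Theorem~\ref{thm.QM} is supposed to deliver. Moreover, the scaling of a Lagrange multiplier does not in general inherit the scaling of the corresponding constraint, so "anticipating the smallness" is not an argument. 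The paper closes this in Steps 2--3 of Appendix~\ref{app.QM}: \emph{without} assuming smallness, it solves the lowest-order Moyal ODE $\pa_\beta g^{(0)}=(h_0^{(0)}\sigma_0+\bm a^{(0)}\cdot\bm\sigma)g^{(0)}$ to get the explicit formula
\begin{equation*}
\M^{(0)}(N)=\exp(h_0^{(0)})\Big(\cosh|\bm a^{(0)}|\,\sigma_0+\sinh|\bm a^{(0)}|\,\tfrac{\bm a^{(0)}}{|\bm a^{(0)}|}\cdot\bm\sigma\Big),
\end{equation*}
and then imposes the leading-order constraint $\langle\M^{(0)}(N)\rangle=n_0\sigma_0$ (which follows from $N=n_0\sigma_0+\eps\bm n\cdot\bm\sigma$) to force $\sinh|\bm a^{(0)}|=0$, hence $\bm a^{(0)}=\bm 0$, hence $\tilde{\bm a}=O(\eps)$. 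Your proposal needs that semiclassical computation (or an equivalent argument) rather than a forward reference to a lemma that itself presupposes the result.
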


We call $\M=\M(N)$ the {\em quantum Maxwellian}. It extends slightly the notion
of the quantum Maxwellian introduced in \cite{DeRi03}. The proof of the
existence of the quantum Maxwellian is a very difficult task, 
even in the one-dimensional case \cite{DuMe19,MePi10}. Regularity properties
of $\M$ are proved in \cite{MePi17}. 
The proof of Theorem \ref{thm.QM} is deferred to Appendix \ref{app.QM}.

We define the Hermitian matrix of Lagrange multipliers by
$\tilde{A}(x) = \tilde{a}_0\sigma_0 + \eps\tilde{\bm{a}}\cdot\bm{\sigma}$.
Then (see \eqref{2.Heps})
\begin{equation}\label{2.HepsA}
  -H_\eps+\tilde{A} = h_0\sigma_0 + \eps\bm{h}_1\cdot\bm{\sigma}, 
	\quad h_0 = -\tfrac12|p|^2 + a_0, \ a_0 = \tilde{a}_0-V,\ 
	\bm{h}_1 = \bm{a} - \alpha p^\perp.
\end{equation}

The definition of the quantum Maxwellian allows us to introduce the
relaxation-time (BGK-type) collision operator
$Q(W)=\tau^{-1}(\M(N)-W)$ into the transport model, where $\tau>0$ is a 
scaled relaxation time, leading to 
$$
  \tau_0\pa_t W + \T W = Q(W),
$$
where the scaled time $\tau_0$ is introduced in Section \ref{sec.scal}
and we recall the definition $\T W = (\ii/\eps)(H_\eps\# W - W\# H_\eps)$
(see Lemma \ref{lem.w}).
The collision operator conserves the particle number and spin since, by
definition of the quantum Maxwellian, $\langle Q(W)\rangle = 0$.

We assume that $\tau_0$ is of order one and $\tau$ is small compared to one.
Physically this means that the time scale of the system is the energy time
$t_E=mx_0/p_0$ and the relaxation time is small compared to $t_E$. This
leads to the Wigner--Boltzmann equation in the hydrodynamic scaling
\begin{equation}\label{2.wbe}
  \tau\pa_t W + \tau\T W = \M(N)-W, \quad t>0, \quad\mbox{where }N = \langle W\rangle.
\end{equation}
The existence of solutions to the von Neumann--BGK equation associated to \eqref{2.wbe}
with values in the Schatten space of order one is proved in \cite{MePi17}. 

We already mentioned in the introduction that we cannot use a classical
diffusion scaling (i.e.\ $\tau_0$ and $\tau$ are of the same order and small)
since the moment $\langle\T\M(N)\rangle$ generally does not vanish.
The following proposition makes this statement more precise.
We recall the notation $[A,B]_\#=A\# B-B\# A$ for two symbols $A$ and $B$.

\begin{lemma}\label{lem.comm}
Let $W$ be a solution to \eqref{2.wbe} and $\tilde{A}$ 
be the Lagrange multiplier matrix
related to the Maxwellian by $\M(N)=\Exp(-H_\eps+\tilde{A})$. 
Then $\langle \T\M(N)\rangle=0$ if and only if $\langle[\tilde{A},\M(N)]_\#\rangle=0$. 
In particular, $\langle \T\M(N)\rangle=0$ if and only if 
$\tilde{A}$ commutes with $N$.
\end{lemma}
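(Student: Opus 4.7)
\medskip

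\noindent\textbf{Proof proposal.}
The plan is to first rewrite $\T\M(N)$ as a Moyal commutator with $\tilde{A}$ by exploiting the definition $\M(N)=\Exp(-H_\eps+\tilde{A})$, and then to reduce the $p$-integral of that Moyal commutator to an ordinary matrix commutator using that $\tilde{A}$ is independent of $p$.

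\medskip

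\noindent\emph{Step 1: rewriting the transport.} By Lemma~\ref{lem.w}, $\T\M(N)=(\ii/\eps)[H_\eps,\M(N)]_\#$. Since $\M(N)=\Exp(-H_\eps+\tilde{A})$, applying $\Log$ gives $\Log\M(N)=-H_\eps+\tilde{A}$, i.e.\ $H_\eps=\tilde{A}-\Log\M(N)$. Substituting,
\[
  [H_\eps,\M(N)]_\# = [\tilde{A},\M(N)]_\# - [\Log\M(N),\M(N)]_\#.
\]
Now I would invoke the Wigner--Weyl correspondence: via $\mathcal{W}^{-1}$, the Moyal product becomes operator composition (Lemma~\ref{lem.hash}), and $\mathcal{W}^{-1}(\Log\M(N))=\log\hat{\rho}_{\M}$ where $\hat{\rho}_{\M}:=\mathcal{W}^{-1}(\M(N))$. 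Since $\log\hat{\rho}_{\M}$ and $\hat{\rho}_{\M}$ commute as operators (functional calculus), we obtain $[\Log\M(N),\M(N)]_\#=0$. Hence
\[
  \T\M(N) = \frac{\ii}{\eps}\,[\tilde{A},\M(N)]_\#,
\]
which at once yields the first equivalence $\langle\T\M(N)\rangle=0\iff\langle[\tilde{A},\M(N)]_\#\rangle=0$.

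\medskip

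\noindent\emph{Step 2: the $p$-integral of $[\tilde A,\M(N)]_\#$.} Because $\tilde{A}=\tilde{A}(x)$ does not depend on $p$, the semiclassical expansion of Lemma~\ref{lem.moyaleps} drastically simplifies: in $\tilde{A}\#_j\M(N)$ only the multi-indices with $\nu=0$ on the left factor contribute, and in $\M(N)\#_j\tilde{A}$ only those with $\mu=0$ on the right factor contribute. The surviving terms for $j\ge 1$ are of the form $\pa_x^\mu\tilde{A}\cdot\pa_p^\mu\M(N)$ (or their mirror), which vanish upon integration in $p$ by the divergence theorem, assuming the standard decay of $\M(N)$ in momentum. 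Only the $j=0$ term survives, giving
\[
  \big\langle[\tilde{A},\M(N)]_\#\big\rangle
  = \tilde{A}\,\langle\M(N)\rangle - \langle\M(N)\rangle\,\tilde{A}
  = [\tilde{A},N],
\]
the ordinary matrix commutator. Thus $\langle\T\M(N)\rangle=0$ is equivalent to $[\tilde{A},N]=0$, which proves the second equivalence.

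\medskip

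\noindent\emph{Expected obstacle.} The argument is formal in two places: the operator identity $[\log\hat\rho_{\M},\hat\rho_{\M}]=0$ requires that $\hat\rho_{\M}$ be a sufficiently regular positive definite operator so that its logarithm is defined and commutes via functional calculus, and the integration-by-parts step requires $\M(N)$ and its $p$-derivatives to decay suitably at infinity. Since the paper's framework is explicitly formal (as stated after Theorem~\ref{thm.full}), these assumptions are acceptable, but they are the only substantive points where rigor would have to be supplied.
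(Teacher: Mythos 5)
Your proof is correct and takes essentially the same route as the paper: the paper likewise converts $[H_\eps,\M(N)]_\#$ into $[\tilde{A},\M(N)]_\#$ by adding and subtracting $\tilde{A}$ and using that $-H_\eps+\tilde{A}$ Moyal-commutes with its quantum exponential $\M(N)$ (equivalent to your functional-calculus observation that $\Log\M(N)$ commutes with $\M(N)$), and then reduces the $p$-moment of the Moyal commutator to the matrix commutator $[\tilde{A},N]$. Your Step~2 is, if anything, slightly more careful than the paper's: the paper invokes \eqref{moyal.int}, which literally concerns the full $(x,p)$-integral, whereas your term-by-term expansion of the Moyal product makes explicit that it is the $p$-independence of $\tilde{A}$ (forcing all higher-order $\#_j$ terms to be total $p$-derivatives) that kills the corrections.
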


\begin{proof}
We know from Lemma \ref{lem.w} that $\ii\eps\T W=-[H_\eps,W]_\#$.
Moreover, since every operator commutes with its exponential, we have
$(-H_\eps+\tilde{A})\#\M(N)=\M(N)\#(-H_\eps+\tilde{A})=0$. This gives
\begin{equation}\label{2.AM}
  -\ii\eps\T\M(N) = [H_\eps,\M(N)]_\#
	= -[-H_\eps+\tilde{A},\M(N)]_\# + [\tilde{A},\M(N)]_\# 
	= [\tilde{A},\M(N)]_\#,
\end{equation}
showing the first statement. By identity \eqref{moyal.int},
$\langle[A,B]_\#\rangle=\langle A\# B\rangle - \langle B\# A\rangle
= \langle AB\rangle - \langle BA\rangle = \langle[A,B]\rangle$ for any
symbols $A$ and $B$. Therefore, since $\tilde{A}$ only depends on $x$,
\begin{equation*}
  -\ii\eps\langle\T\M(N)\rangle = \langle[\tilde{A},\M(N)]_\#\rangle
	= \langle[\tilde{A},\M(N)]\rangle = [\tilde{A},\langle \M(N)\rangle] 
	= [\tilde{A},N].
\end{equation*}
This proves the second statement.
\end{proof}


\section{Derivation of the nonlocal quantum model}\label{sec.full}

We insert the function $G:=-\tau^{-1}(\M(N)-W)$ into the Wigner--Boltzmann equation 
\eqref{2.wbe} and use the property $W=\M(N)+O(\tau)$:
$$
  G = -\pa_t W - \T W = -\pa_t \M(N) - \T \M(N) + O(\tau).
$$
After integrating \eqref{2.wbe} with respect to $p$ and taking into account
that $\langle W\rangle = N$ and $\langle \M(N)-W\rangle = 0$, we find that
\begin{align*}
  \pa_t N &= -\langle \T W\rangle = -\langle\T \M(N)\rangle - \tau\langle\T G\rangle \\
	&= -\langle\T \M(N)\rangle + \tau\langle\T \pa_t \M(N)\rangle
	+ \tau\langle\T\T \M(N)\rangle + O(\tau^2).
\end{align*}

We wish to compute the terms on the right-hand side. To simplify the notation,
we set $\M:=\M(N)$. 
The proof of Lemma \ref{lem.comm} shows that
$\ii\eps\langle\T\M\rangle = -[\tilde{A},N]$. 
Inserting the Pauli decompositions
$\tilde{A}=\tilde{a}_0\sigma_0 + \eps\bm{a}\cdot\bm{\sigma}$ and 
$N=n_0\sigma_0+\eps\bm{n}\cdot\bm{\sigma}$ and using \eqref{2.pauli}, 
a computation leads to
\begin{equation}\label{3.TM}
  \langle\T\M\rangle = 2\eps(\bm{n}\times\bm{a})\cdot\bm{\sigma}.
\end{equation}

To calculate $\langle\T\T\M\rangle$, we use \eqref{2.AM},
the decomposition $\M=\M_0\sigma_0 + \bm{\M}\cdot\bm{\sigma}$, 
rule \eqref{moyal.AB}, and property \eqref{2.theta}:
\begin{align}
  \T\M &= \frac{1}{\ii\eps}(\M\#\tilde{A} - \tilde{A}\#\M)
	= \frac{2}{\ii\eps}\M\#_{\rm odd}\tilde{A} \nonumber \\
	&= \frac{2}{\ii\eps}\bigg(\M_0\#_{\rm odd}\tilde{a}_0 
	+ \eps\sum_{j=1}^3\M_j\#_{\rm odd}
	a_j\bigg)\sigma_0 + \frac{2}{\ii\eps}\bigg(\eps\M_0\#_{\rm odd}
	\bm{a} + \bm{\M}\#_{\rm odd} \tilde{a}_0 
	+ \ii\eps\M\times_{\#_{\rm even}}\bm{a}\bigg)\cdot
	\bm{\sigma} \nonumber \\
	&= -\bigg(\theta_\eps[\tilde{a}_0](\M_0) 
	+ \eps\sum_{j=1}^3\theta_\eps[a_j](\M_j)\bigg)
	\sigma_0 \nonumber \\
	&\phantom{xx}{}- \bigg(\eps\theta_\eps[\bm{a}](\M_0) 
	+ \theta_\eps[\tilde{a}_0](\bm{\M})
	- 2\bm{\M}\times_{\#_{\rm even}}\bm{a}\bigg)\cdot\sigma. \label{3.TM2}
\end{align}
Furthermore, we replace $W$ in \eqref{2.TW} by $\T\M$, giving
\begin{align*}
  \T\T\M &= \big(p\cdot\na_x(\T\M)_0 + \alpha\eps\na_x^\perp\cdot\T\bm{\M}
	- \theta_\eps[V](\T\M)_0\big)\sigma_0 \\
	&\phantom{xx}{}+ \big(p\cdot\na_x\T\bm{\M} + \alpha\eps\na_x^\perp(\T\M)_0
	- 2\alpha p^\perp\times\T\bm{\M} - \theta_\eps[V]\T\bm{M}\big)\cdot\bm{\sigma}.
\end{align*}
Next, we integrate this expression with respect to $p$. The $\sigma_0$-component 
becomes, using the decomposition \eqref{3.TM2}, 
\begin{align*}
  \langle(\T\T\M)_0\rangle &= -\int_{\R^2}p\cdot\na_x\bigg(
	\theta_\eps[\tilde{a}_0](\M_0)
	+ \eps\sum_{j=1}^3\theta_\eps[a_j](\M_j)\bigg)dp \\
	&\phantom{xx}{}- \alpha\eps\int_{\R^2}\na_x^\perp\cdot\big(
	\eps\theta_\eps[\bm{a}](\M_0) + \theta_\eps[\tilde{a}_0](\bm{\M}) 
	- 2\bm{\M}\times_{\#_{\rm even}}\bm{a}\big)dp \\
	&\phantom{xx}{}+ \int_{\R^2}\theta_\eps[V]\bigg(\theta_\eps[\tilde{a}_0](\M_0)
	+ \eps\sum_{j=1}^3\theta_\eps[a_j](\M_j)\bigg)dp.
\end{align*}
In view of \eqref{moyal.int}, \eqref{2.theta2}, and $\langle\M_j\rangle=\eps n_j$, 
the first integral equals $\diver_x(n_0\na_x \tilde{a}_0 + \eps^2\bm{n}
\cdot\na_x\bm{a})$, while the second integral becomes
$2\alpha\eps^2\na_x^\perp\cdot(\bm{n}\times\bm{a})$, and the third integral
vanishes. Recalling that $\tilde{a}_0=a_0+V$, we infer that
\begin{equation}\label{3.T2}
  \langle(\T\T\M)_0\rangle
	= \diver_x\big(n_0\na_x a_0 + n_0\na_x V + \eps^2\bm{n}\cdot\na_x\bm{a}\big)
	+ 2\alpha\eps^2\na_x^\perp\cdot(\bm{n}\times\bm{a}).
\end{equation}
In a similar way, we compute the $\bm{\sigma}$-component of $\langle\T\T\M\rangle$:
\begin{align}
  \langle\T\T\bm{\M}\rangle &= -\int_{\R^2}p\cdot\na_x\big(\eps\theta_\eps[\bm{a}](\M_0)
	+ \theta_\eps[\tilde{a}_0](\bm{\M}) - 2\bm{\M}\times_{\#_{\rm even}}\bm{a}
	\big)dp \label{3.T3} \\
	&\phantom{xx}- \alpha\eps\int_{\R^2}\na_x^\perp\bigg(\theta_\eps[\tilde{a}_0]
	(\M_0) + \eps\sum_{j=1}^3\theta_\eps[a_j](\M_j)\bigg)dp \nonumber \\
	&\phantom{xx}{}+ 2\alpha\int_{\R^2}p^\perp\times\big(\eps\theta_\eps[\bm{a}](\M_0)
	+ \theta_\eps[\tilde{a}_0](\bm{\M}) - 2\bm{\M}\times_{\#_{\rm even}}\bm{a}
	\big)dp \nonumber \\
	&\phantom{xx}{}+ \int_{\R^2}\theta_\eps[V]\big(\eps\theta_\eps[\bm{a}](\M_0)
	+ \theta_\eps[\tilde{a}_0](\bm{\M}) + 2\bm{\M}\times_{\#_{\rm even}}\bm{a}
	\big)dp \nonumber \\
	&= \eps\diver_x\big(n_0\na_x\bm{a} + \bm{n}\na_x a_0 + \bm{n}\na_x V\big)
	+ 2\na_x\cdot(\bm{J}^T\times\bm{a}) \nonumber \\
	&\phantom{xx}{}- 2\alpha\eps\big(n_0\na_x^\perp\times\bm{a} + \na_x^\perp(a_0+V)
	\times\bm{n}\big) + 4\alpha\big(\bm{a}\langle p^\perp\cdot\bm{\M}\rangle
	+ \bm{J}^T\bm{a}^\perp\big), \nonumber
\end{align}
where $\bm{J}^T_k = \langle p_k\M\rangle$.

We turn now to the last term $\langle\T\pa_t\M\rangle$. 
Identity \eqref{3.TM} shows that
\begin{equation}\label{3.T4}
  \langle\T\pa_t\M\rangle = \pa_t\langle\T\M\rangle
	= 2\eps(\pa_t\bm{n}\times\bm{a} + \bm{n}\times\pa_t\bm{a})\cdot\bm{\sigma}.
\end{equation}
It remains to compute $\pa_t\bm{n}$ and $\pa_t\bm{a}$. By \eqref{3.TM} again,
$$
  \pa_t n_0\sigma_0 + \eps\pa_t\bm{n}\cdot\bm{\sigma} = \pa_t N 
	= -\langle\T\M\rangle + O(\tau) 
	= -2\eps(\bm{n}\times\bm{a})\cdot\bm{\sigma} + O(\tau),
$$
and thus $\pa_t^0 n_0=0$ and $\pa_t^0\bm{n}=-2\bm{n}\times\bm{a}$ 
at first order in $\tau$. We can write
$$
  \pa_t^0\bm{a} = \sum_{i=0}^3\frac{\delta\bm{a}}{\delta n_i}\pa_t^0 n_i
	= -2\sum_{i=1}^3\frac{\delta\bm{a}}{\delta n_i}(\bm{n}\times\bm{a})_i,
$$
where $\delta\bm{a}/\delta n_i$ denotes the variational derivative of $\bm{a}$.
Collecting expressions \eqref{3.TM}--\eqref{3.T4} finishes the proof of Theorem \ref{thm.full}.


\section{Derivation of the semiclassical quantum model}\label{sec.semi}


First, we expand $\M(N)$ in terms of $\eps$. 

\begin{proposition}\label{prop.max}
Let $\M(N)$ be the quantum Maxwellian defined in Theorem \ref{thm.QM}. Then
\begin{align*}
  \M(N) &= \exp(h_0)\sigma_0 + \eps\exp(h_0)\bm{h}_1\cdot\bm{\sigma} \\
	&\phantom{xx}{}+ \frac{\eps^2}{8}\exp(h_0)\bigg(\Delta a_0 + \frac13\big(|\na a_0|^2
	- p^T(\na\otimes\na a_0)p\big) + 4|\bm{h}_1|^2\bigg)\sigma_0 \\
	&\phantom{xx}{}+ \frac{\eps^3}{24}\exp(h_0)\Big(\big(3\Delta a_0 + |\na a_0|^2
	- p^T(\na\otimes\na a_0)p + 4|\bm{h}_1|^2\big)\bm{h}_1 \\
	&\phantom{xx}{}+ 3\Delta\bm{a} - 12\alpha\na^\perp\times\bm{a}
	+ 2\na\bm{a}\cdot\na a_0 - p^T(\na\otimes\na\bm{a})p 
	+ 2\alpha\na^\perp(\na a_0\cdot p) \\
	&\phantom{xx}{}+ 4\big((\na\bm{a})p - \alpha\na^\perp a_0\big)\times\bm{h}_1
	\Big)\cdot\bm{\sigma} + O(\eps^4),
\end{align*}
recalling that $h_0=-\tfrac12|p|^2+a_0$, $a_0=\tilde{a}-V$, and
$\bm{h}_1=\bm{a}-\alpha p^\perp$.
\end{proposition}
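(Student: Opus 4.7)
The plan is to use the Bloch equation for the quantum exponential. Setting $S:=-H_\eps+\tilde{A}=h_0\sigma_0+\eps\bm{h}_1\cdot\bm{\sigma}$ and $f(\beta;x,p):=\Exp(\beta S)$ for $\beta\in[0,1]$, differentiating the defining series $\Exp(\beta S)=\sum_{k\ge 0}(\beta S)^{\#k}/k!$ in $\beta$ gives
$$
  \pa_\beta f = S\# f,\qquad f(0)=\sigma_0,\qquad \M(N)=f(1,\cdot,\cdot).
$$
Writing $f=f_0\sigma_0+\bm{f}\cdot\bm{\sigma}$ and using the Pauli--Moyal identity \eqref{moyal.AB}, this is equivalent to the coupled system
\begin{align*}
  \pa_\beta f_0 &= h_0\# f_0 + \eps\,\bm{h}_1\cdot_{\#}\bm{f},\\
  \pa_\beta\bm{f} &= h_0\#\bm{f} + \eps\,\bm{h}_1\# f_0 + \ii\eps\,\bm{h}_1\times_{\#}\bm{f},
\end{align*}
with $f_0(0)=1$ and $\bm{f}(0)=0$.

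Next I would insert the ansatz $f_0=\sum_{j=0}^3\eps^j f_0^{(j)}$, $\bm{f}=\sum_{j=0}^3\eps^j\bm{f}^{(j)}$ and expand every Moyal product using Lemma \ref{lem.moyaleps}. A crucial simplification is that $h_0=-\tfrac12|p|^2+a_0(x)$ has no mixed $x$-$p$ derivatives and its pure $p$-derivatives vanish beyond the second, while $\bm{h}_1=\bm{a}(x)-\alpha p^\perp$ is affine in $p$, so $\pa_p^\nu\bm{h}_1=0$ for $|\nu|\ge 2$ and all mixed derivatives of $\bm{h}_1$ vanish. This collapses each $\#_j$-contribution to a small, explicit number of terms per order. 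At every order in $\eps$, the hierarchy then consists of a linear ODE of the form $\pa_\beta u=h_0\,u+g(\beta)$ with $u(0)=0$, solved by variation of constants, $u(\beta)=\int_0^\beta e^{(\beta-s)h_0}g(s)\,ds$.

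Carrying this out: at order $0$ one obtains $f_0^{(0)}=e^{\beta h_0}$, $\bm{f}^{(0)}=0$; at order $1$ one checks that $h_0\#_1 f_0^{(0)}$ vanishes because $\na_p h_0\cdot\na_x f_0^{(0)}$ and $\na_x h_0\cdot\na_p f_0^{(0)}$ both equal $-\beta p\cdot\na a_0\,e^{\beta h_0}$, leaving $f_0^{(1)}=0$ and $\bm{f}^{(1)}(\beta)=\beta\bm{h}_1 e^{\beta h_0}$. At orders $2$ and $3$ the inhomogeneities are built from the $\#_j$-brackets of $h_0$ and $\bm{h}_1$ against the previously computed iterates, together with the cross-product terms $\ii\bm{h}_1\times_{\#_k}\bm{f}^{(j)}$; the $\beta$-integrals that arise are of the elementary form $\int_0^1\beta^k\,d\beta=1/(k+1)$, and after setting $\beta=1$ they assemble into the factors $1/8$ and $1/24$ and the various polynomial-in-$p$ pieces appearing in the claim.

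The main obstacle is the bookkeeping at orders $2$ and $3$: the $-\alpha p^\perp$ part of $\bm{h}_1$ couples to $\na a_0$ and $\na\bm{a}$ through the first- and second-order Moyal brackets, producing the $p^T(\na\otimes\na a_0)p$, $\na^\perp(\na a_0\cdot p)$ and $\na\bm{a}\cdot\na a_0$ contributions, while the Moyal cross product $\ii\bm{h}_1\times_{\#}\bm{f}$ generates the remaining $\alpha\na^\perp\times\bm{a}$ and $(\na\bm{a})p\times\bm{h}_1$ pieces. No new idea is required beyond this step; it is a matter of tracking carefully which multi-indices $(\mu,\nu)$ in Lemma \ref{lem.moyaleps} give nonzero contributions from the polynomial-in-$p$ structure of $h_0$ and $\bm{h}_1$ at each order, and then executing the combinatorics.
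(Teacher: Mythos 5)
Your proposal follows essentially the same route as the paper: the Bloch-type equation $\pa_\beta\,\Exp(\beta S)=S\#\Exp(\beta S)$ with $S=-H_\eps+\tilde A$, the $\eps$-expansion into a recursive hierarchy via Lemma \ref{lem.moyaleps}, the observation that the polynomial-in-$p$ structure of $h_0$ and $\bm{h}_1$ truncates the Moyal expansion, and Duhamel's formula at each order (the paper does this in \eqref{4.aux} and Appendix \ref{app.g}, splitting into Pauli components inside the hierarchy rather than before, which is an equivalent reorganization). Your orders $0$ and $1$ are correct and match the paper's; you stop at a sketch for orders $2$ and $3$, which is where the bulk of the paper's computation lies, but the strategy and the identification of where each term originates are accurate, so no conceptual gap remains.
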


We need an expansion up to order $\eps^3$ since the nonlocal model in
Theorem \ref{thm.full} contains a term of order $\eps^{-1}$. 

\begin{proof}
We introduce the function
$$
  g(\beta) = \Exp\big(\beta(h_0\sigma_0 + \eps\bm{h}_1\cdot\bm{\sigma})\big),
	\quad\beta\ge 0.
$$
We see from \eqref{2.HepsA} that
the quantum Maxwellian corresponds to $\M(N)=g(1)$. The variable $\beta$
can be interpreted as the inverse temperature, and $\beta=1$ means that the
temperature of the systems equals the thermal temperature. 
Lemma \ref{lem.hash} implies that
\begin{align}
  \pa_\beta g(\beta) &= \pa_\beta\big\{\mathcal{W}\big[
	\exp\big(\beta(-\mathcal{H}_\eps+\tilde{A})\big)\big]\big\}
	= \mathcal{W}\big[(-\mathcal{H}_\eps+\tilde{A})
	\exp(-\mathcal{H}_\eps+\tilde{A})\big] \label{4.g} \\
  &= \mathcal{W}(-\mathcal{H}_\eps+\tilde{A})\#\mathcal{W}
	\big[\exp\big(\beta(-\mathcal{H}_\eps+\tilde{A})\big)\big]
	= (-H_\eps+\tilde{A})\# g(\beta) \nonumber
\end{align}
for $\beta>0$ and $g(0)=\sigma_0$.
Introducing the semiclassical expansions $g(\beta)=\sum_{k=0}^\infty\eps^k 
g^{(k)}(\beta)$ on the left-hand side, inserting the semiclassical expansion of the
Moyal product on the right-hand side (Lemma \ref{lem.moyal}), 
and identifying the corresponding order
of $\eps$, we obtain a system of recursive ordinary differential equations 
for $g^{(k)}$,
\begin{equation}\label{4.aux}
  \pa_\beta g^{(k)}(\beta) = \sum_{\ell=0}^k h_0\sigma_0\#_\ell g^{(k-\ell)}(\beta)
	+ \sum_{\ell=0}^{k-1}(\bm{h}_1\cdot\bm{\sigma})\#_\ell g^{(k-\ell-1)}(\beta),
	\quad k\ge 0,
\end{equation}
with the initial conditions $g^{(0)}(0)=\sigma_0$ and $g^{(k)}(0)=0$ for $k\ge 1$.
Recalling that the zeroth-order Moyal product is just the ordinary matrix 
multiplication, we find for $k=0$ that
$$
  \pa_\beta g^{(0)}(\beta) = h_0 g^{(0)}(\beta), \quad\beta>0, \quad
	g^{(0)}(0)=\sigma_0,
$$
with the solution $g^{(0)}(\beta)=\exp(\beta h_0)\sigma_0$. 
(Note that this solution differs from the corresponding one in
Appendix \ref{app.QM} since there, the function $\bm{h}_1$ contains an additional 
term of order one.) For $k\ge 1$, \eqref{4.aux} becomes
$$
  \pa_\beta g^{(k)}(\beta) = h_0 g^{(0)}\sigma_0
	+ \sum_{\ell=0}^{k-1}\big(h_0\sigma_0\#_\ell g^{(k-\ell)} 
	+ (\bm{h}_1\cdot\bm{\sigma})\#_\ell g^{(k-\ell-1)}\big), \quad g^{(k)}(0)=0.
$$
We show in Appendix \ref{app.g} that
\begin{align}
  g^{(1)}(\beta) &= \beta\exp(\beta h_0)\bm{h}_1\cdot\bm{\sigma}, \label{g1} \\
	g^{(2)}(\beta) &= \frac{\beta^2}{8}\exp(\beta h_0)\bigg(\Delta a_0
	+ \frac{\beta}{3}\big(|\na a_0|^2 - p^T(\na\otimes\na a_0)p\big)
	+ 4|\bm{h}_1|^2\bigg)\sigma_0, \label{g2} \\
	g^{(3)}(\beta) &= \frac{\beta^2}{24}\exp(\beta h_0)\bigg(\big(3\beta\Delta a_0
	+ \beta^2(|\na a_0|^2 - p^T(\na\otimes\na a_0)p) + 4\beta|\bm{h}_1|^2\big)\bm{h}_1 
	\label{g3} \\
	&\phantom{xx}{}+ 3\Delta\bm{a} - 12\alpha\na^\perp\times\bm{a}
	+ \beta\big(2\na\bm{a}\cdot\na a_0 - p^T(\na\otimes\na\bm{a})p
	+ 2\alpha\na^\perp(\na a_0\cdot p)\big) \nonumber \\
	&\phantom{xx}{} + 4\beta\big((\na\bm{a}) p - \alpha\na^\perp a_0\big)\times\bm{h}_1
	\bigg)\cdot\bm{\sigma}. \nonumber
\end{align}
The result follows after substituting the previous expressions into 
$\M(N)=\sum_{k=1}^3 \eps^k g^{(k)}(1)+O(\eps^4)$ and collecting the terms.
\end{proof}

Expressions \eqref{g1}--\eqref{g3} 
correspond to the expansion of $\M$ as an explicit function
of $\eps$, i.e.\ $g^{(k)}(1)|_{\eps=0} = (1/m!)(\pa^k\M/\pa\eps^k)|_{\eps=0}$. 
However, $\M$ depends on $\eps$ also through its dependence on
$\tilde{A}$. Thus, we need to expand $\tilde{A}$ or, equivalently,
$a_0$ and $\bm{a}$ in terms of $\eps$. To this end, we expand
$$
  \M = \sum_{k=0}^\infty\eps^k \M^{(k)}, \quad
	a_j = \sum_{k=0}^\infty\eps^k a_j^{(k)}, \quad j=0,1,2,3.
$$
We Taylor-expand the left-hand side with respect to $\eps$ 
and identify the expressions with the corresponding orders of $\eps$ from
the right-hand side:
\begin{align}
  \M^{(0)} &= \M\big|_{\eps=0}, \quad
	\M^{(1)} = \frac{\pa\M}{\pa\eps}\bigg|_{\eps=0} + \sum_{j=0}^3\frac{\pa\M}{\pa a_j}
	\bigg|_{\eps=0}a_j^{(1)}, \label{3.M1} \\
	2\M^{(2)} &= \frac{\pa^2\M}{\pa\eps^2}\bigg|_{\eps=0}
	+ 2\sum_{j=0}^3\bigg(\frac{\pa^2\M}{\pa\eps\pa a_j}\bigg|_{\eps=0}a_j^{(1)}
	+ \frac{\pa\M}{\pa a_j}\bigg|_{\eps=0}a_j^{(2)}\bigg)
	+ \sum_{j,k=0}^3\frac{\pa^2\M}{\pa a_j\pa a_k}\bigg|_{\eps=0}a_j^{(1)}a_k^{(1)}, 
	\label{3.M2} \\
	6\M^{(3)} &= \frac{\pa^3\M}{\pa\eps^3}\bigg|_{\eps=0}
	+ 3\sum_{j=0}^3\bigg(\frac{\pa^3\M}{\pa\eps^2\pa a_j}\bigg|_{\eps=0}a_j^{(1)}
	+ 2\frac{\pa^2\M}{\pa\eps\pa a_j}\bigg|_{\eps=0}a_j^{(2)}
	+ 2\frac{\pa\M}{\pa a_j}\bigg|_{\eps=0}a_j^{(3)}\bigg) \label{3.M3} \\
	&\phantom{xx}{}+ \sum_{j,k=0}^3\bigg(3\frac{\pa^3\M}{\pa\eps\pa a_j\pa a_k}
	\bigg|_{\eps=0}a_j^{(1)}a_k^{(1)} + 2\frac{\pa^2\M}{\pa a_j\pa a_k}\bigg|_{\eps=0}
	(2a_j^{(2)}a_k^{(1)} + a_j^{(1)}a_k^{(2)})\bigg) \nonumber \\
	&\phantom{xx}{} + \sum_{j,k,\ell=0}^3\frac{\pa^3\M}{\pa a_j\pa a_k\pa a_\ell}
	\bigg|_{\eps=0}a_j^{(1)}a_k^{(1)}a_\ell^{(1)}. \nonumber
\end{align}
The $j$th-order of the Lagrange multiplier $a_i^{(j)}$ is determined by
identifying the orders in the constraint $\langle\M\rangle = n_0\sigma_0
+\eps\bm{n}\cdot\bm{\sigma}$:
\begin{equation}\label{4.Mj}
  \langle\M^{(0)}\rangle = n_0\sigma_0, \quad
	\langle\M^{(1)}\rangle = \bm{n}\cdot\bm{\sigma}, \quad
	\langle\M^{(2)}\rangle = \langle\M^{(3)}\rangle = 0.
\end{equation}
This leads to the following result.

\begin{lemma}\label{lem.a}
The semiclassical expansion of the Lagrange multipliers $a_0$ and $\bm{a}$ reads as
\begin{align*}
  a_0 &= \log\frac{n_0}{2\pi} - \eps^2\bigg\{\frac{1}{12}\bigg(\frac{\Delta n_0}{n_0}
	- \frac{|\na n_0|^2}{2n_0^2}\bigg) + \frac12\bigg|\frac{\bm{n}}{n_0}\bigg|^2
	+ \alpha^2\bigg\} + O(\eps^4), \\
	\bm{a} &= \frac{\bm{n}}{n_0} + \frac{\eps^2}{3}\bigg\{\frac{\bm{n}}{4n_0}\bigg(
	\frac{\Delta n_0}{n_0} - \bigg|\frac{\na n_0}{n_0}\bigg|^2 
	+ 4\bigg|\frac{\bm{n}}{n_0}\bigg|^2 + 8\alpha^2\bigg)
	+ \alpha^2\frac{\bm{n}^{\perp\perp}}{n_0} \\
	&\phantom{xx}{}- \frac{1}{4}\bigg(\frac{\Delta\bm{n}}{n_0}
	- \frac{\na\bm{n}}{n_0}\cdot\frac{\na n_0}{n_0}\bigg)
	+ \frac{\alpha}{2n_0}\bigg(4\na^\perp\times\bm{n} + \frac{\na^\perp n_0}{n_0}
	\times\bm{n}\bigg)\bigg\} + O(\eps^3), 
\end{align*}
recalling that $\bm{n}^{\perp\perp}:=(-n_1,-n_2,0)^T$.
\end{lemma}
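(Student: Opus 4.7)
\emph{Proof plan for Lemma \ref{lem.a}.} The plan is to insert Proposition~\ref{prop.max} into \eqref{3.M1}--\eqref{3.M3}, expand $a_j=\sum_{k\ge 0}\eps^k a_j^{(k)}$ as Taylor series, and enforce the constraints \eqref{4.Mj} order by order, splitting each relation along the Pauli basis. A useful structural feature visible in Proposition~\ref{prop.max} is that the explicit-$\eps$ coefficients alternate between Pauli components: the $\eps^0$ and $\eps^2$ coefficients live in $\sigma_0$, while the $\eps^1$ and $\eps^3$ coefficients live in $\bm{\sigma}$. Any Pauli mixing inside $\M^{(k)}$ can therefore only arise through the chain rule in the $a_j^{(k)}$.

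At zeroth order, $\M^{(0)}=\exp(h_0^{(0)})\sigma_0$ with $h_0^{(0)}=-\tfrac12|p|^2+a_0^{(0)}$, and $\langle\M^{(0)}\rangle=n_0\sigma_0$ reduces to the Gaussian identity $2\pi\exp(a_0^{(0)})=n_0$, so $a_0^{(0)}=\log(n_0/(2\pi))$. At first order, the $\sigma_0$-part of \eqref{3.M1} gives $n_0 a_0^{(1)}=0$, hence $a_0^{(1)}=0$, while the $\bm{\sigma}$-part, using $\langle\exp(h_0^{(0)})p^\perp\rangle=0$, gives $n_0\bm{a}^{(0)}=\bm{n}$, hence $\bm{a}^{(0)}=\bm{n}/n_0$. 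At second order, since $a_0^{(1)}=0$, the $\bm{\sigma}$-part of \eqref{3.M2} collapses to $\exp(h_0^{(0)})\bm{a}^{(1)}\cdot\bm{\sigma}$, so $\langle\M^{(2)}\rangle=0$ forces $\bm{a}^{(1)}=0$. The $\sigma_0$-part then reads $n_0 a_0^{(2)}=-\langle F_2(a^{(0)})\rangle_{\sigma_0}$, where $F_2$ denotes the $\eps^2$-coefficient in Proposition~\ref{prop.max}; evaluating Gaussian moments via $\langle\exp(h_0^{(0)})p_i p_j\rangle=n_0\delta_{ij}$ (so that $p^TMp$ contributes $n_0\operatorname{tr}M$ and $|p^\perp|^2$ contributes $2n_0$) and rewriting $\na a_0^{(0)}=\na n_0/n_0$, $\Delta a_0^{(0)}=\Delta n_0/n_0-|\na n_0/n_0|^2$ reproduces exactly the stated $a_0^{(2)}$.

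The remaining task is the third order, which determines $\bm{a}^{(2)}$. With $a_0^{(1)}=0$ and $\bm{a}^{(1)}=0$, most terms in \eqref{3.M3} vanish and the $\bm{\sigma}$-projection reduces to
$$
\M^{(3)}\big|_{\bm{\sigma}} = F_3(a^{(0)})\big|_{\bm{\sigma}} + a_0^{(2)}\exp(h_0^{(0)})(\bm{a}^{(0)}-\alpha p^\perp) + \exp(h_0^{(0)})\bm{a}^{(2)},
$$
where $F_3$ is the $\eps^3$-coefficient from Proposition~\ref{prop.max}. Imposing $\langle\M^{(3)}\rangle_{\bm{\sigma}}=0$ and using $\langle\exp(h_0^{(0)})(\bm{a}^{(0)}-\alpha p^\perp)\rangle=\bm{n}$ yields
$$
n_0\bm{a}^{(2)} = -a_0^{(2)}\bm{n} - \langle F_3(a^{(0)})\rangle_{\bm{\sigma}},
$$
from which $\bm{a}^{(2)}$ is read off.

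The main obstacle is the evaluation of $\langle F_3(a^{(0)})\rangle_{\bm{\sigma}}$: the formula for $F_3$ in Proposition~\ref{prop.max} contains eight distinct contributions mixing $\Delta\bm{a}$, $\na^\perp\times\bm{a}$, $\na\bm{a}\cdot\na a_0$, $p^T(\na\otimes\na\bm{a})p$, $\na^\perp(\na a_0\cdot p)$, $|\bm{h}_1|^2\bm{h}_1$, and the cross products $(\na\bm{a})p\times\bm{h}_1$ and $\alpha\na^\perp a_0\times\bm{h}_1$, all weighted by $\exp(h_0^{(0)})$. Each piece has to be paired with the appropriate Gaussian moments and repackaged into the building blocks $\na n_0/n_0$, $\na\bm{n}/n_0$, $\na^\perp\times\bm{n}$, $(\na^\perp n_0)\times\bm{n}$, and $\bm{n}^{\perp\perp}/n_0$ appearing in the claim. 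Particular care is needed for the pieces quadratic in $p^\perp$: via $\langle\exp(h_0^{(0)}) p^\perp_i p^\perp_j\rangle=n_0\delta_{ij}$ they generate the $\alpha^2\bm{n}^{\perp\perp}/n_0$ contribution, while the $(\na\bm{a})p\times\bm{h}_1$ term produces through its $-\alpha p^\perp$-part the $\alpha$-linear entries $\na^\perp\times\bm{n}$ and $(\na^\perp n_0)\times\bm{n}$, and through its $\bm{a}^{(0)}$-part the $\na\bm{n}/n_0$ contribution. Once this bookkeeping is organised, the remaining algebra is mechanical and the coefficients combine to the stated formula.
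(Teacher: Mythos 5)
Your plan is correct and coincides with the paper's own proof: expand $\M$ and the Lagrange multipliers via the chain-rule identities \eqref{3.M1}--\eqref{3.M3}, split each order along the Pauli basis, and impose the moment constraints \eqref{4.Mj}. The order-by-order conclusions you state ($a_0^{(0)}=\log(n_0/2\pi)$, $a_0^{(1)}=0$, $\bm{a}^{(0)}=\bm{n}/n_0$, $\bm{a}^{(1)}=\bm{0}$, then $a_0^{(2)}$ from the $\sigma_0$-projection of the second order and $\bm{a}^{(2)}$ from the relation $n_0\bm{a}^{(2)}=-a_0^{(2)}\bm{n}-\langle g^{(3)}(1)|_{\eps=0}\rangle$) are exactly the paper's equations (3.a02) and (3.gg3)--(3.a2), and your Gaussian-moment bookkeeping, including the treatment of the $|p^\perp|^2$ and $(\na\bm a)p\times\bm h_1$ pieces, matches the computation the paper calls ``tedious but elementary.''
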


\begin{proof}
We compute the coefficients $a_0^{(j)}$ for $j=0,1,2,3$ and
$\bm{a}^{(j)}$ for $j=0,1,2$ using \eqref{4.Mj}. The first condition leads to
$$
  n_0 = \big\langle\M^{(0)}_0\big|_{\eps=0}\big\rangle
	= \big\langle g_0^{(0)}\big|_{\eps=0}\big\rangle
  = \big\langle \exp(-\tfrac12|p|^2+a_0^{(0)})\big\rangle = 2\pi\exp(a_0^{(0)}),
$$
since $\langle\exp(-\frac12|p|^2)\rangle=2\pi$,
which allows us to identify $a_0^{(0)}=\log(n_0)-\log(2\pi)$. 

Next, we observe that the other derivatives of $\M$ are given by
\begin{align*}
  & \frac{\pa\M}{\pa a_0}\bigg|_{\eps=0} = \frac{\pa^2\M}{\pa^2 a_0^2}\bigg|_{\eps=0}
	= \exp(h_0^{(0)})\sigma_0, \\
	& \frac{\pa\M}{\pa a_j}\bigg|_{\eps=0} 
	= \frac{\pa^2\M}{\pa a_j\pa a_k}\bigg|_{\eps=0} = 0
	\quad\mbox{for }j\neq 0,\ k\neq 0, \\
	& \frac{\pa^2\M}{\pa\eps\pa a_0}\bigg|_{\eps=0} = \exp(h_0^{(0)})\bm{h}_1^{(0)}
	\cdot\bm{\sigma}, \quad
	\frac{\pa^2\M}{\pa\eps\pa a_j}\bigg|_{\eps=0} = \exp(h_0^{(0)})\sigma_j\quad
	\mbox{for }j\neq 0,
\end{align*}
where we have set $h_0^{(0)}:=-\frac12|p|^2+a_0^{(0)}$ and 
$\bm{h}_1^{(0)} := \bm{a}^{(0)} - \alpha p^\perp$. By \eqref{3.M1} and \eqref{4.Mj}, 
this yields for $\M^{(1)}$:
\begin{align*}
  \bm{n}\cdot\bm{\sigma} &= \langle\M^{(1)}\rangle 
	= \bigg\langle\frac{\pa\M}{\pa\eps}\bigg|_{\eps=0}\bigg\rangle
	+ \sum_{j=0}^3\bigg\langle\frac{\pa\M}{\pa a_j}\bigg|_{\eps=0}a_j^{(1)}
	\bigg\rangle = \big\langle\exp(h_0^{(0)})\big(a_0^{(1)}\sigma_0
	+ \bm{h}_1^{(0)}\cdot\bm{\sigma}\big)\big\rangle \\
	&= 2\pi\exp(a_0^{(0)})\big(a_0^{(1)}\sigma_0 + \bm{a}^{(0)}\cdot\bm{\sigma}\big)
	= n_0\big(a_0^{(1)}\sigma_0 + \bm{a}^{(0)}\cdot\bm{\sigma}\big).
\end{align*}
Identifying the Pauli coefficients, we infer that
$a_0^{(1)} = 0$ and $\bm{a}^{(0)} = \bm{n}/n_0$.

For $\M^{(2)}$, we use \eqref{3.M2} in $\langle\M^{(2)}\rangle=0$, and
insert the expressions for the partial derivatives of $\M$. A tedious but elementary
computation leads to
$$
  \bigg(\frac18\Delta a_0^{(0)} + \frac{1}{24}\big(|\na a_0^{(0)}|^2 
	- \Delta a_0^{(0)}\big) + \frac12\big(|\bm{a}^{(0)}|^2 + 2\alpha^2\big)
	 + a_0^{(2)}\bigg)\sigma_0 + \bm{a}^{(1)}\cdot\bm{\sigma} = 0.
$$
It follows that $\bm{a}^{(1)}=\bm{0}$ and, inserting
$a_0^{(0)}=\log(n_0)-\log(2\pi)$ and $\bm{a}^{(0)}=\bm{n}/n_0$,
\begin{equation}\label{3.a02}
  a_0^{(2)} = -\frac{1}{12}\Delta\log n_0 - \frac{1}{24}|\na\log n_0|^2
	- \frac{|\bm{n}|^2}{2n_0^2} - \alpha^2.
\end{equation}

It remains to evaluate $\langle\M^{(3)}\rangle=0$. Our previous results allow us
to simplify expansion \eqref{3.M3}:
$$
  0 = \bigg\langle g^{(3)}(1)\big|_{\eps=0} + \sum_{j=0}^3\frac{\pa^2\M}{\pa\eps\pa a_j}
	\bigg|_{\eps=0}a_j^{(2)} + \frac{\pa\M}{\pa a_0}\bigg|_{\eps=0}a_0^{(3)}
	\bigg\rangle.
$$
The first two terms have a spinorial part only, while the third term has only 
a trace part. This gives $a_0^{(3)}=0$. It remains to calculate
\begin{align}
  0 &= \big\langle g^{(3)}(1)\big|_{\eps=0} + a_0^{(2)}\exp(h_0^{(0)})\bm{h}_1^{(0)}
	+ \exp(h_0^{(0)})\bm{a}^{(2)}\big\rangle \label{3.gg3} \\
  &= \big\langle g^{(3)}(1)\big|_{\eps=0}\big\rangle
	+ a_0^{(2)}\bm{n} + n_0\bm{a}^{(2)}, \nonumber
\end{align}
which in fact determines $\bm{a}^{(2)}$. 
A straightforward but again tedious computation shows that the first term equals
\begin{align}
  \big\langle g^{(3)}(1)\big|_{\eps=0}\big\rangle
	&= \frac{n_0}{12}\bigg\{\bigg(\Delta a_0^{(0)} + \frac12|\na a_0^{(0)}|^2
	+ 2(|\bm{a}^{(0)}|^2 + 2\alpha^2)\bigg)\bm{a}^{(0)} 
	- 4\alpha^2(\bm{a}^{(0)})^{\perp\perp}\bigg\} \label{3.g3} \\
	&\phantom{xx}{}+ \frac{n_0}{12}\big(\Delta\bm{a}^{(0)} + \na\bm{a}^{(0)}
	\cdot\na a_0^{(0)} - 2\alpha(4\na^\perp\times\bm{a}^{(0)} + \na^\perp a_0^{(0)})
	\times\bm{a}^{(0)}), \nonumber
\end{align}
where $(\bm{a}^{(0)})^{\perp\perp} = (-a_1^{(0)},-a_2^{(0)},0)^T$.
We differentiate $a_0^{(0)}=\log(n_0)-\log(2\pi)$ and $\bm{a}^{(0)}=\bm{n}/n_0$
with respect to $x$ and include the resulting expressions into \eqref{3.g3}. Then, using
expression \eqref{3.a02} for $a_0^{(2)}$, \eqref{3.gg3} allows us to compute
$\bm{a}^{(2)}$, eventually yielding
\begin{align}
  \bm{a}^{(2)} &= \frac{1}{12n_0}\bigg\{\bigg(\frac{\Delta n_0}{n_0}
	- \bigg|\frac{\na n_0}{n_0}\bigg|^2 + 4\bigg|\frac{\bm{n}}{n_0}\bigg|^2
	+ 8\alpha^2\bigg)\bm{n} + 4\alpha^2\bm{n}^{\perp\perp}\bigg\} \label{3.a2} \\
	&\phantom{xx}{}- \frac{1}{12}\bigg(\frac{\Delta\bm{n}}{n_0} - \frac{\na\bm{n}}{n_0}
	\cdot\frac{\na n_0}{n_0}\bigg) + \frac{\alpha}{6n_0}\bigg(
	4\na^\perp\times\bm{n}+\frac{1}{n_0}\na^\perp n_0\times\bm{n}\bigg). \nonumber
\end{align}
This finishes the proof.
\end{proof}

For the proof of Theorem \ref{thm.semi}, we insert the expansions from 
Lemma \ref{lem.a} into the nonlocal quantum-spin model \eqref{1.n0}--\eqref{1.n}. 
We compute
\begin{align*}
  n_0\na a_0 &= n_0\na\big(a_0^{(0)} + \eps^2 a_0^{(2)}\big) + O(\eps^3) \\
	&= \na n_0 - \eps^2n_0\na\bigg(\frac{1}{12}\bigg(\frac{\Delta n_0}{n_0}
	- \frac{|\na n_0|^2}{2n_0^2}\bigg) + \frac12\bigg|\frac{\bm{n}}{n_0}\bigg|^2
	\bigg) + O(\eps^3) \\
	&= \na n_0 - \eps^2\bigg(\frac{n_0}{6}\na
	\frac{\Delta\sqrt{n_0}}{\sqrt{n_0}} + \frac{1}{n_0}\bm{n}\cdot\na\bm{n}
	- \bigg|\frac{\bm{n}}{n_0}\bigg|^2\na n_0\bigg) + O(\eps^3).
\end{align*}
We only need the zeroth order for $\na\bm{a}$ since it appears at order
$O(\eps^2)$. Then $\na\bm{a} = \na\bm{n}/n_0
- \bm{n}\na n_0/n_0^2 + O(\eps^2)$ and consequently,
$$
  \bm{n}\cdot\na\bm{a} = \frac{1}{n_0}\bm{n}\cdot\na\bm{n}
	- \bigg|\frac{\bm{n}}{n_0}\bigg|^2\na n_0 + O(\eps^2).
$$
Hence, neglecting terms of order $\alpha^m\eps^n$ with $m+n>2$, equation \eqref{1.n0} 
for the charge density becomes
\begin{align*}
  \pa_t n_0 &= \tau\diver\bigg\{n_0\na\log\frac{n_0}{2\pi} + n_0\na V
	- \frac{\eps^2}{12}n_0\na\bigg(\frac{\Delta n_0}{n_0} - \frac{|\na n_0|^2}{2n_0}
	\bigg) - \frac{\eps^2}{2}n_0\na\bigg|\frac{\bm{n}}{n_0}\bigg|^2
	+ \eps^2\bm{n}\cdot\na\frac{\bm{n}}{n_0}\bigg\} \\
	&= \tau\diver\bigg(\na n_0+n_0\na V - \frac{\eps^2}{6}n_0
	\na\frac{\Delta\sqrt{n_0}}{\sqrt{n_0}}\bigg).
\end{align*}

The computation for equation \eqref{1.n} for the spin vector is more involved.
We calculate the expansion for the terms of the first line of \eqref{1.n}, 
using Lemma \ref{lem.a} and only reporting the results:
\begin{align}
  -2\bm{n}\times\bm{a} &= \frac{\eps^2}{6}\frac{\bm{n}}{n_0}\times\bigg(
	\frac{\Delta\bm{n}}{n_0} - \frac{\na\bm{n}}{n_0}\cdot\frac{\na n_0}{n_0}\bigg)
	+ O(\eps^3), \label{4.ntimesa} \\
	n_0\na\bm{a}+\bm{n}\na a_0 &= \na\bm{n} + \frac{\eps^2}{12}\bm{n}\bigg(
	2\bigg|\frac{\na n_0}{n_0}\bigg|^2\frac{\na n_0}{n_0}
	- 4\frac{\bm{n}}{n_0}\cdot\frac{\na\bm{n}}{n_0}
	- \frac{\na n_0}{n_0}\frac{\Delta n_0}{n_0}
	- \frac{\na n_0(\na\otimes\na n_0)}{n_0^2}\bigg) \nonumber \\
	&\phantom{xx}{}- \frac{\eps^2}{12}\na\Delta\bm{n}
	+ \frac{\eps^2}{12}\na\bm{n}\bigg\{\frac{\na\otimes\na n_0}{n_0}
	+ \bigg(4\bigg|\frac{\bm{n}}{n_0}\bigg|^2 + \frac{\Delta n_0}{n_0}
	- \bigg|\frac{\na n_0}{n_0}\bigg|^2\bigg)\sigma_0\bigg\} \nonumber \\
	&\phantom{xx}{}+ \frac{\eps^2}{12}\bigg\{\bigg(\Delta\bm{n}
	- 2\na\bm{n}\cdot\frac{\na n_0}{n_0}\bigg)\frac{\na n_0}{n_0}
	+ \frac{\na n_0}{n_0}\na\otimes\na\bm{n}\bigg\} + O(\eps^3), \nonumber \\
	\frac{2}{\eps}\bm{J}^T\times\bm{a} &= 2\alpha
	\begin{pmatrix} n_3 & 0 & 0 \\ 0 & n_3 & 0 \\ -n_1 & -n_2 & 0 \end{pmatrix}
	+ \frac{\eps^2}{3n_0}\na\bm{n}\times\bm{n} + O(\alpha\eps^2+\eps^3). \nonumber 
\end{align}
The first two terms in the second line of \eqref{1.n} are of order $\alpha$
such that we only need to expand them up to first order. We obtain, up to an error
of order $O(\alpha\eps^2)$,
$$
  -2\alpha\tau\big(n_0\na^\perp\times\bm{a}^{(0)} 
	+ \na^\perp(a_0^{(0)}+V)\times\bm{n}\big) 
	= -2\alpha\tau(\na^\perp\times\bm{n}+\na^\perp V\times\bm{n}).
$$
Using $p^\perp\cdot\bm{h}_1=p^\perp\cdot\bm{a}-\alpha|p^\perp|^2$ and
$\bm{J}^T\bm{a}^\perp = -\alpha\eps\bm{n}^{\perp\perp} + O(\alpha\eps^2)$,
the last part of the second line of \eqref{1.n} becomes
\begin{align*}
  4\tau&\frac{\alpha}{\eps}\big(\bm{a}\langle p^\perp\cdot\M(N)\rangle 
	+ \bm{J}^T\bm{a}^\perp\big)
	= 4\alpha\tau\bigg(\frac{\bm{n}}{n_0}\langle\exp(h_0)p^\perp
	\cdot\bm{h}_1\rangle - \alpha\bm{n}^{\perp\perp}\bigg) + O(\alpha\eps^2) \\
	&= -4\alpha^2\tau(2\bm{n}+\bm{n}^{\perp\perp}) + O(\alpha\eps^2).
\end{align*}
Recalling that $\bm{n}\times\bm{a} = O(\eps^2)$, the first term in the last line
of \eqref{1.n} is of order $O(\eps^3)$, i.e.\ $2\eps(\bm{n}\times\bm{a})\times\bm{a}
=O(\eps^3)$, and will be neglected.

It remains to expand the last term in the last line of \eqref{1.n},
$\bm{n}\times\pa_t^{0}\bm{a}$, where $\pa_t^0\bm{a}$
is the lowest-order term of $\pa_t\bm{a}$ with respect to $\tau$. We expand it
with respect to $\eps$:
$$
  \pa_t^0\bm{a} = \pa_t^0\bm{a}^{(0)} + \eps^2\pa_t^0\bm{a}^{(2)} + O(\eps^3).
$$
Equations \eqref{1.n0} and \eqref{1.n} show that $\pa_t^0n_0 = 0$,
$\pa_t^0\bm{n}=-2\bm{n}\times\bm{a}$ and therefore,
\begin{align*}
  \pa_t^0\bm{a}^{(0)} &= \pa_t^0\bigg(\frac{\bm{n}}{n_0}\bigg)
	= \frac{\pa_t^0\bm{n}}{n_0} - \frac{\bm{n}}{n_0^2}\pa_t^0 n_0
	= -\frac{2}{n_0}\bm{n}\times\bm{a} \\
	&= -\frac{2}{n_0}\bm{n}\times(\bm{a}^{(0)}+\eps^2 \bm{a}^{(2)}) + O(\eps^3)
	= -\frac{2\eps^2}{n_0}\bm{n}\times\bm{a}^{(2)} + O(\eps^3),
\end{align*}
since $\bm{n}\times\bm{a}^{(0)}=\bm{n}\times\bm{n}/n_0=0$,
and $\bm{a}^{(2)}$ is given by \eqref{3.a2}:
\begin{align*}
  & \bm{a}^{(2)} = f(n_0,\bm{n})\frac{\bm{n}}{n_0} 
	- \frac{1}{12}\frac{\Delta\bm{n}}{n_0} 
	+ \frac{1}{12}\frac{\na\bm{n}}{n_0}\cdot\frac{\na n_0}{n_0}, \\
	& \mbox{where }f(n_0,\bm{n}) := \frac13\bigg|\frac{\bm{n}}{n_0}\bigg|^2
	+ \frac{1}{12}\frac{\Delta n_0}{n_0} - \frac{1}{12}\bigg|\frac{\na n_0}{n_0}\bigg|^2.
\end{align*}
Because of $\bm{n}\times\bm{n}=\bm{0}$, some terms cancel in $\bm{n}\times\bm{a}^{(2)}$,
and we end up with
$$
  \pa_t^0\bm{a}^{(0)} = \frac{\eps^2}{6n_0}\bm{n}\times\bigg(
	\frac{\Delta n_0}{n_0} - \frac{\na\bm{n}}{n_0}\cdot\frac{\na n_0}{n_0}\bigg)
	+ O(\eps^3).
$$
Differentiating $\bm{a}^{(2)}$ yields
$$
  \pa_t^0\bm{a}^{(2)} = \pa_t^0 f(n_0,\bm{n})\frac{\bm{n}}{n_0}
	+ \frac{f(n_0,\bm{n})}{n_0}\pa_t^0\bm{n}
	- \frac{1}{12}\pa_t^0\bigg(\frac{\Delta n_0}{n_0} 
	- \frac{\na\bm{n}}{n_0}\cdot\frac{\na n_0}{n_0}\bigg).
$$
Since we only need the cross product $\bm{n}\times\pa_t^0\bm{a}$, the first term,
which is parallel to $\bm{n}$, vanishes. Moreover, the second term 
$\pa_t^0\bm{n}=-2\bm{n}\times\bm{a}=-2\bm{n}\times\bm{a}^{(0)}+O(\eps^2)
=O(\eps^2)$ can be neglected, as it is already of higher order in $\eps$.
The same conclusion holds true for the third term:
$$
  \pa_t^0\bigg(\frac{\Delta n_0}{n_0} 
	- \frac{\na\bm{n}}{n_0}\cdot\frac{\na n_0}{n_0}\bigg)
  = -\frac{2}{n_0}\Delta(\bm{n}\times\bm{a}^{(0)}) + \frac{2}{n_0}\na(\bm{n}\times
	\bm{a}^{(0)}) + O(\eps^2) = O(\eps^2).
$$
It follows that $\pa_t^0\bm{a}^{(2)}=\pa_t^0 f(n_0,\bm{n})(\bm{n}/n_0)+O(\eps^2)$.
Summarizing these results, we end up with
$$
  \bm{n}\times\pa_t^0\bm{a}
	= \bm{n}\times\pa_t^0(\bm{a}^{(0)} + \eps^2\bm{a}^{(2)}) + O(\eps^3)
	= \frac{\eps^2}{6n_0}\bm{n}\times\bigg\{\bm{n}\times\bigg(\frac{\Delta n_0}{n_0} 
	- \frac{\na\bm{n}}{n_0}\cdot\frac{\na n_0}{n_0}\bigg)\bigg\} + O(\eps^3).
$$
Collecting these expressions, we see that equations \eqref{1.n0}--\eqref{1.n}
reduce, up to order $O(\alpha^m\eps^n)$ with $m+n > 2$, to the local model
\eqref{1.n0loc}--\eqref{1.nloc}.


\begin{appendix}

\section{Proof of Theorem \ref{thm.QM}}\label{app.QM}

We split the proof into three steps. First, we show a weaker result than
stated in Theorem \ref{thm.QM}, namely that the spin component $\tilde{\bm{a}}$ of 
$\tilde{A} = a_0\sigma_0+\tilde{\bm{a}}\cdot\bm{\sigma}$
is possibly of order one. Then we compute the leading order of the semiclassical
expansion of the quantum Maxwellian and show that in fact
$\tilde{\bm{a}}=\eps\bm{a}$ is of order $\eps$.

{\em Step 1.} Let $N=n_0\sigma_0+\eps\bm{n}\cdot\bm{\sigma}$ be given.
Our aim is to show that if the quantum maximum entropy problem has a solution
then it is of the form $(\M(N)) = \Exp(-H_\eps+\tilde{A})$,
where $\tilde{A}=\tilde{a}_0\sigma_0+\tilde{\bm{a}}\cdot\bm{\sigma}$. 
By construction, the solution $\M(N) = \M_0\sigma_0+\bm{\M}\cdot\bm{\sigma}$ satisfies
$$
  n_0 = \langle\M_0\rangle, \quad \eps n_j = \langle\M_j\sigma_j\rangle, 
	\quad j=1,2,3.
$$
The proof follows the corresponding proofs in the literature; see 
\cite{DMR05,DeRi03}. Our constrained minimization problem is equivalent to 
the saddle-point problem
$$
  \mathcal{E}(\M(N)) = \min_W\max_{\tilde{A}}\mathcal{L}(W,\tilde{A})
	= \max_{\tilde{A}}\min_W\mathcal{L}(W,\tilde{A}),
$$
where the variational functional equals
$$
  \mathcal{L}(W,\tilde{A}) = \mathcal{E}(W) - \mathcal{K}(W), 
	\quad\mathcal{K}(W) = \operatorname{tr}\int_{\R^2}
	\tilde{A}(x)(\langle W\rangle - N)dx
$$
and $\mathcal{E}(W)$ is given in \eqref{2.E}.
Reformulating \cite[Lemma 3.3]{DeRi03} in terms of Wigner functions, we see that
the G\^ateaux derivative of the free energy with respect to $W$
in the direction of $\xi$ is given by
$$
  \delta_W\mathcal{E}(W,\tilde{A};\xi) = \operatorname{tr}\int_{\R^2}\int_{\R^2}
	(\Log(W)+H_\eps)\xi dxdp.
$$
Since $\tilde{A}$ and $N$ do not depend on $W$, the G\^ateaux derivative of
$\mathcal{K}$ is
$$
  \delta_W\mathcal{K}(W,\tilde{A};\xi) = 
	\operatorname{tr}\int_{\R^2}\int_{\R^2}\tilde{A}\langle\xi\rangle dx.
$$
Thus, the Euler--Lagrange equation associated to the problem
$\min_W\mathcal{L}(W,\tilde{A})$ becomes
$$
  \operatorname{tr}\int_{\R^2}\int_{\R^2}(\Log W + H_\eps - \tilde{A})\xi dxdp = 0
$$
for all variations $\xi$. This implies that $\Log W + H_\eps - \tilde{A}=0$
and hence $\M(N):=W=\Exp(-H_\eps+\tilde{A})$. We compute the G\^ateaux derivative of
$\mathcal{L}(W,\tilde{A})$ with respect to $\tilde{A}$, using \eqref{2.pauli}:
$$
  0 = \delta_{\tilde{A}}\mathcal{L}(W,\tilde{A};\xi)
	= \operatorname{tr}\int_{\R^2}\xi(\langle W\rangle - N)dx 
	= 2\int_{\R^2}\big(\xi_0(\langle w_0\rangle - n_0)
	+ \bm{\xi}\cdot(\langle\bm{w}\rangle-\eps\bm{n})\big)dx,
$$
where the variations are given by $\xi=\xi_0\sigma_0+\bm{\xi}\cdot\bm{\sigma}$.
This immediately gives $n_0=\langle w_0\rangle$ and $\eps\bm{n}=\langle\bm{w}\rangle$.

{\em Step 2.} We formulate $-H_\eps+\tilde{A}$ in terms of the Pauli components:
\begin{align*}
  & {-H_\eps} + \tilde{A} = h_0\sigma_0 + \bm{h}_1\cdot\bm{\sigma}, \quad\mbox{where} \\
	& h_0(x,p) = -\tfrac12|p|^2 + a_0(x), \quad a_0(x) = \tilde{a}_0(x)-V(x), \quad
	\bm{h}_1(x,p) = \bm{a}(x) - \eps\alpha p^\perp.
\end{align*}
(Note that the definition of $\bm{h}$ is slightly different from that one in
\eqref{2.HepsA} since we do not know at this point that $\tilde{\bm{a}}$ is
of order $\eps$.)
Let $\tilde{A}^{(0)}=\tilde{a}_0^{(0)}\sigma_0+\bm{a}^{(0)}\cdot\bm{\sigma}$
be the leading order of the semiclassical expansion of $\tilde{A}$ with respect
to $\eps$. We claim that the leading order of $\M(N)$ is given by
\begin{equation}\label{a.M0}
  \M^{(0)}(N) = \exp(h_0^{(0)})\bigg(\cosh|\bm{a}^{(0)}|\sigma_0
	+ \sinh|\bm{a}^{(0)}|\frac{\bm{a}^{(0)}}{|\bm{a}^{(0)}|}\cdot\bm{\sigma}\bigg),
\end{equation}
where $h_0^{(0)}=-|p|^2/2+a_0^{(0)}$ and $a_0^{(0)}=\tilde{a}_0^{(0)}-V(x)$.
If $\bm{a}_0^{(0)}=\bm{0}$, we set $\M^{(0)}(N)=\exp(h_0^{(0)})\sigma_0$.

The proof of \eqref{a.M0} is similar to the proof of Proposition \ref{prop.max}.
We have shown in \eqref{4.g} that $g(\beta)=\Exp(\beta(-H_\eps+\tilde{A}))$ 
satisfies the differential equation
$$
  \pa_\beta g(\beta) = (h_0\sigma_0 + \bm{h}_1\cdot\bm{\sigma})\# g(\beta),
	\quad \beta>0, \quad g(0) = \sigma_0.
$$
According to Lemma \ref{lem.moyaleps}, this equation becomes at lowest order
$$
  \pa_\beta g^{(0)}(\beta) = (h_0^{(0)}\sigma_0 + \bm{a}^{(0)}\cdot\bm{\sigma})
	g^{(0)}(\beta), \quad \beta>0, \quad g(0)=\sigma_0.
$$
To solve this differential equation, we remove the first term on the right-hand
side by introducing the function $f(\beta)=\exp(-\beta h_0^{(0)})g^{(0)}(\beta)$,
which solves
$$
  \pa_\beta f(\beta) = \bm{a}^{(0)}\cdot\bm{\sigma}f(\beta), \quad \beta>0,
	\quad f(0)=\sigma_0.
$$
The solution is the matrix exponential 
$f(\beta)=\exp(\beta\bm{a}^{(0)}\cdot\bm{\sigma})$. Recalling that
$$
  \bm{a}^{(0)}\cdot\bm{\sigma} = \begin{pmatrix}
	a_3^{(0)} & a_1^{(0)}-\ii a_2^{(0)} \\ a_1^{(0)} + \ii a_2^{(0)} & -a_3^{(0)}
	\end{pmatrix},
$$
direct calculations give
$(\bm{a}^{(0)}\cdot\bm{\sigma})^{2k}=|\bm{a}^{(0)}|^{2k}\sigma_0$ and
$(\bm{a}^{(0)}\cdot\bm{\sigma})^{2k+1}=|\bm{a}^{(0)}|^{2k}\bm{a}^{(0)}\cdot\bm{\sigma}$
for all $k\in\N$. Therefore,
$$
  f(\beta) = \sum_{k=0}^\infty\frac{\beta^k}{k!}(\bm{a}^{(0)}\cdot\bm{\sigma})^k
	= \cosh(\beta|\bm{a}^{(0)}|)\sigma_0
	+ \sinh(\beta|\bm{a}^{(0)}|)\frac{\bm{a}^{(0)}}{|\bm{a}^{(0)}|}\cdot\bm{\sigma}
$$
if $\bm{a}^{(0)}\neq\bm{0}$ and $f(\beta)=\sigma_0$ if $\bm{a}^{(0)}=\bm{0}$
(also see \cite[Formula (9)]{ZaJu13}). This shows the claim.

{\em Step 3.} The density matrix $N=n_0\sigma_0+\eps\bm{n}\cdot\bm{\sigma}$
equals $N^{(0)}=n_0\sigma_0$ at leading order, and the moment constraints are 
$\langle\M_0^{(0)}(N)\rangle=n_0^{(0)}$, $\langle\M^{(0)}(N)\rangle=0$
at leading order. Then equation \eqref{a.M0} shows that
$$
  0 = |\langle\M^{(0)}(N)\rangle|
	= \bigg|\big\langle\exp(h_0^{(0)})\big\rangle\sinh|\bm{a}^{(0)}|
	\frac{\bm{a}^{(0)}}{|\bm{a}^{(0)}|}\bigg|
	= \big|\big\langle\exp(h_0^{(0)})\big\rangle\sinh|\bm{a}^{(0)}|\big|,
$$
and it follows from $\langle\exp(h_0^{(0)})\rangle = 2\pi\exp(a_0^{(0)})\neq 0$
that $\bm{a}^{(0)}=\bm{0}$. This means that $\bm{a}$ vanishes at leading order,
and we can redefine the Lagrange multiplier matrix as 
$\tilde{A}=\tilde{a}_0\sigma_0+\eps\bm{a}\cdot\bm{\sigma}$.
This finishes the proof of Theorem \ref{thm.QM}.


\section{Semiclassical expansion of $g(\beta)$}\label{app.g}

In this section, we show formulas \eqref{g1}--\eqref{g3} for the orders
$g^{(j)}(\beta)$, where $j=1,2,3$. 
We use the notation $h_0=-|p|^2/2+a_0$ and $\bm{h}_1=\bm{a}-\alpha p^\perp$.

\subsection{Order one}

According to \eqref{4.aux}, the function $g^{(1)}(\beta)$ is the solution 
to the differential equation
$$
  \pa_\beta g^{(1)} = h_0\sigma_0 g^{(1)} + h_0\sigma_0\#_1 g^{(0)}
	+ (\bm{h}_1\cdot\bm{\sigma})g^{(1)}, \quad \beta>0, \quad g^{(1)}(0)=0.
$$
Since $g^{(0)}$ is a function of $h_0$, the Moyal product $h_0\#_1 g^{(0)}$
vanishes. Duhamels's formula then leads to the solution
$g^{(1)}(\beta)=\beta g^{(0)}\bm{h}_1\cdot\bm{\sigma}$, which equals \eqref{g1}.

\subsection{Order two}

The differential equation reads here as
$$
  \pa_\beta g^{(2)} = h_0\sigma_0\#_0 g^{(2)} + h_0\sigma_0\#_1 g^{(1)}
	+ h_0\sigma_0\#_2 g^{(0)} + (\bm{h}_1\cdot\bm{\sigma})\#_0 g^{(1)}
	+ (\bm{h}_1\cdot\bm{\sigma})\#_1 g^{(0)}, 
$$
with initial condition $g^{(2)}(0)=0$. The first term on the right-hand side
contains the unknown, while the others are known from the preceding orders.
Because of \eqref{moyal.two}, the $j$th Pauli component ($j=1,2,3$) of the second term 
can be written as
\begin{align*}
  2\ii h_0\#_1 g_j^{(1)} &= \na_p h_0\cdot\na_x
	\big(\beta g^{(0)}(a_j-\alpha p_j^\perp)\big)
	- \na_x h_0\cdot\na_p\big(\beta g^{(0)}(a_j-\alpha p_j^\perp)\big) \\
	&= -\beta g^{(0)}p\cdot\na_x a_j - \alpha\beta g^{(0)}\na_x a_0\cdot\na_p p_j^\perp \\
	&= \na_x(\bm{h}_1)_j\cdot\na_p g^{(0)} - \na_p(\bm{h}_1)_j\cdot\na_x g^{(0)}
	= -2\ii(\bm{h}_1)_j\#_1 g^{(0)}.
\end{align*}
Therefore, the terms $h_0\sigma_0\#_1 g^{(1)}$ and 
$(\bm{h}_1\cdot\bm{\sigma})\#_1 g^{(0)}$ cancel out. 
Since $\pa_x^\mu\pa_p^\nu h_0=0$ for multiindices satisfying 
$|\mu|\ge 1$ and $|\nu|\ge 1$, an elementary computation shows that
\begin{align*}
  h_0\#_2 g_0^{(0)} &= -\frac14\sum_{|\mu|+|\nu|=2}\frac{(-1)^{|\mu|}}{\mu!\nu!}
	(\pa_x^\mu\pa_p^\nu h_0)(\pa_p^\mu\pa_x^\nu g^{(0)}) 
	= -\frac18\sum_{i,k=1}^2\big(\pa^2_{x_i x_k}h_0\pa^2_{p_i p_k}g^{(0)}
	- \delta_{ik}\pa^2_{x_i x_k}g^{(0)}\big) \\
	&= \frac{\beta}{8}g^{(0)}\big[2\Delta a_0 - \beta\big(p^T(\na\otimes\na a_0)p
	- |\na a_0|^2\big)\big].
\end{align*} 
The final product $(\bm{h}_1\cdot\bm{\sigma})\#_0 g^{(1)}$ is just a multiplication.
We apply rule \eqref{2.pauli} with $a_0=b_0=0$ to obtain
$$
  (\bm{h}_1\cdot\bm{\sigma})\#_0 g^{(1)} = (\bm{h}_1\cdot\bm{\sigma})
	(\beta g^{(0)}\bm{h}_1\cdot\bm{\sigma}) = \beta g^{(0)}|\bm{h}_1|^2\sigma_0.
$$
Therefore, the differential equation for $g^{(2)}$ becomes
$$
  \pa_\beta g^{(2)} = h_0\sigma_0 g^{(2)}
	+ \beta g^{(0)}\bigg(\frac14\delta a_0 - \frac{\beta}{8}
	\big(p^T(\na\otimes\na a_0)p - |\na a_0|^2\big) + |\bm{h}_1|^2\bigg)\sigma_0
$$
for $\beta>0$ with initial datum $g^{(2)}(0)=0$. Duhamel's formula leads to
\eqref{g2}.

\subsection{Order three}

We need to solve the differential equation
\begin{equation}\label{a.dg3}
  \pa_\beta g^{(3)} = h_0\sigma_0 g^{(3)} + h_0\sigma_0\#_1 g^{(2)}
	+ h_0\sigma_0\#_2 g^{(1)} + h_0\sigma_0\#_3 g^{(0)}
	+ (\bm{h}_1\cdot\bm{\sigma})\#_1 g^{(1)} + (\bm{h}_1\cdot\bm{\sigma})\#_2 g^{(0)}
\end{equation}
for $\beta>0$ with initial datum $g^{(3)}(0)=0$.
To this end, we compute the right-hand side term by term. 
Since some of the computations are quite involved but straightforward,
we only report the results. It turns out that all $\sigma_0$-components cancel out
and only the $\bm{\sigma}$-components remain.

We write $g^{(2)}=(\beta^2/8)h_0 \lambda\sigma_0$, where
$$
  \lambda := \Delta a_0 + \frac{\beta}{3}\big(|\na_x a_0|^2 
	- p^T(\na_x\otimes\na_x a_0)p\big) + 4|\bm{h}_1|^2.
$$
Since $g^{(0)}$ is a function of $h_0$, we find for the second term that
\begin{align*}
  2\ii &h_0\#_1 g^{(2)} = \frac{\ii\beta^2}{4}g^{(0)}(h_0\#_1\lambda)\sigma_0
	= \frac{\beta^2}{8}h_0(\na_p h_0\cdot\na_x\lambda - \na_x h_0\cdot\na_p\lambda)
	\sigma_0 \\
	&= \frac{\beta^2}{8}g^{(0)}\bigg(\frac{\beta}{3}p\cdot\na_x(p^T(\na_x\otimes\na_x
	a_0)p) - p\cdot\na_x\Delta_x a_0 + 8\big(\alpha\na_x^\perp a_0 - (\na_x\bm{a})p\big)
	\cdot\bm{h}_1\bigg)\sigma_0.
\end{align*}
The next term reduces to $h_0\#_2\bm{g}^{(1)}\cdot\bm{\sigma}$, so we have to
calculate $h_0\#_2\bm{g}^{(1)}$:
\begin{align*}
  h_0\#_2\bm{g}^{(1)} &= -\frac14\sum_{|\mu|+|\nu|=2}\frac{(-1)^{|\mu|}}{\mu!\nu!}
	(\pa_x^\mu\pa_p^\nu h_0)(\pa_p^\mu\pa_x^\nu \bm{g}^{(1)}) \\
	&= -\frac18\sum_{i,k=1}^2\pa_{x_i x_k}^2h_0\pa_{p_ip_k}\bm{g}^{(1)}
	+ \frac18\sum_{k=1}^2\pa_{x_k x_k}^2\bm{g}^{(1)} \\
	&= \frac{\beta}{8}g^{(0)}\big[\big(2\beta\Delta_x a_0 + \beta^2(|\na_x a_0|^2
	- p^T(\na_x\otimes\na_x a_0)p)\big)\bm{h}_1 \\
	&\phantom{xx}{}+ 2\beta\alpha\na_x^\perp(\na_x a_0\cdot p)
	+ 2\beta\na_x\bm{a}\cdot\na_x a_0 + \Delta_x\bm{a}\big].
\end{align*}
We compute the fourth term on the right-hand side of \eqref{a.dg3} by observing 
that $\pa_x^\mu\pa_p^\nu h_0=0$ for $|\mu|\le 2$ and $|\nu|=3-|\mu|$:
\begin{align*}
  h_0\sigma_0\#_3 g^{(0)} &= \frac{1}{8\ii}\sum_{|\mu|=3}\frac{1}{\mu!}
	\pa_x^\mu h_0\pa_p^\mu g^{(0)}\sigma_0 \\
	&= \frac{\beta^2}{16\ii}g^{(0)}\bigg(p\cdot\na_x\Delta_x a_0
	- \frac{\beta}{3}p\cdot\na_x(p^T(\na_x\otimes\na_x a_0)p)\bigg)\sigma_0.
\end{align*}
The fifth term is just an ordinary multiplication between
$\bm{h}_1$ and $g^{(2)}$:
$$
  \bm{h}_1\#_0 g^{(2)} = \frac{\beta^2}{8}g^{(0)}\bigg(\Delta a_0 
	+ \frac{\beta}{3}\big(|\na_x a_0|^2 - p^T(\na_x\otimes\na_x a_0)p\big) 
	+ 4|\bm{h}_1|^2\bigg)\bm{h}_1\cdot\bm{\sigma}.
$$

The computation of the sixth term is a bit more involved. Formula \eqref{moyal.AB}
gives
\begin{equation}\label{a.aux}
  (\bm{h}_1\cdot\bm{\sigma})\#_1 g^{(1)}
	= (\bm{h}_1\cdot\bm{\sigma})\#_1(\bm{g}^{(1)}\cdot\bm{\sigma})
	= (\bm{h}_1\cdot_{\#_1}\bm{g}^{(1)})\sigma_0
	+ \ii(\bm{h}_1\times_{\#_1}\bm{g}^{(1)})\cdot\bm{\sigma},
\end{equation}
recalling that ``$\cdot_{\#_1}$'' and ``$\times_{\#_1}$'' are the usual vector
operations, where the multiplication is replaced by the order-one Moyal product.
Since $\bm{g}^{(1)}$ is a function of $\bm{h}_1$, it follows from \eqref{g1} that
\begin{align*}
  2\ii\bm{h}_1\cdot_{\#_1}\bm{g}^{(1)}
	&= \beta\big(\na_p\bm{h}_1\cdot\na_x(g^{(0)}\bm{h}_1) 
	- \na_x\bm{h}_1\cdot\na_p (g^{(0)}\bm{h}_1)\big) \\ 
	&= \beta^2 g^{(0)}\big(-\alpha\na_x^\perp a_0 + (\na_x\bm{a})p\big)
	\cdot\bm{h}_1.
\end{align*}
For the second term on the right-hand side of \eqref{a.aux}, we write
$2\ii\bm{h}_1\times_{\#_1}\bm{g}^{(1)}=\na_p\bm{h}_1\times\na_x\bm{g}^{(1)}
- \na_x\bm{h}_1\times\na_p\bm{g}^{(1)}$, where the cross product refers to the
vectors $\bm{h}_1$ and $\bm{g}^{(1)}$ and not to the gradients. Then, inserting
$g^{(1)}=\beta g^{(0)}\bm{h}_1\cdot\bm{\sigma}$ (see \eqref{g1} again), 
a computation shows that
$$
  2\ii\bm{h}_1\times_{\#_1}\bm{g}^{(1)} = \beta g^{(0)}\big[\beta\big(\na_x\bm{a} p
	- \alpha\na_x^\perp a_0\big)\times\bm{h}_1 - 2\alpha\na_x^\perp\times\bm{a}\big].
$$
Thus, \eqref{a.aux} becomes
\begin{align*}
  (\bm{h}_1\cdot\bm{\sigma})\#_1 g^{(1)}
	&= \beta g^{(0)}\bigg[\frac{\beta}{2\ii}\big((\na_x\bm{a})p - \alpha\na_x^\perp a_0
	\big)\cdot\bm{h}_1\sigma_0 \\
	&\phantom{xx}{}+ \bigg(\frac{\beta}{2}\big((\na_x\bm{a})p - \alpha\na_x^\perp a_0
	\big)\times\bm{h}_1 - \alpha\na_x^\perp\times\bm{a}\bigg)\cdot\bm{\sigma}\bigg].
\end{align*}
Finally, the last term on the right-hand side of \eqref{a.dg3}
is computed according to
$$
  \bm{h}_1\#_2 g^{(0)} = -\frac18\sum_{i,k=1}^2\pa^2_{x_i x_k}\bm{h}_1
	\pa^2_{p_i p_k}g^{(0)} = -\frac{\beta}{8}g^{(0)}\big[\beta p^T(\na_x\otimes\na_x
	\bm{a})p - \Delta_x\bm{a}\big].
$$

Substituting these expressions in \eqref{a.dg3}, we see that the 
$\sigma_0$-components cancel out, and we end up with the differential equation
\begin{align*}
  \pa_\beta g^{(3)} &= h_0\sigma_0 g^{(3)} + \frac{\beta}{8}\bigg[\beta\bigg(
	3\Delta_x a_0 + \frac43\beta\big(|\na_x a_0|^2 - p^T(\na_x\otimes\na_x a_0)p\big)
	+ 4|\bm{h}_1|^2\bigg)\bm{h}_1 \\
	&\phantom{xx}{}+ 2\Delta_x\bm{a} - 8\alpha\na_x^\perp\times\bm{a}
	+ \beta\big(2\na_x\bm{a}\cdot\na_x a_0 - p^T(\na_x\otimes\na_x\bm{a})p
	+ 2\alpha\na_x^\perp(\na_x a_0\cdot p)\big) \\
	&\phantom{xx}{}+ 4\beta\big((\na_x\bm{a})p - \alpha\na_x^\perp a_0\big)
	\times\bm{h}_1\bigg]\cdot\bm{\sigma}
\end{align*}
for $\beta>0$ with initial datum $g^{(3)}(0)=0$. Duhamels's formula then leads to
\eqref{g3}.

\end{appendix}


\end{document}